\providecommand{\U}[1]{\protect\rule{.1in}{.1in}}
\newtheorem {theorem}{Theorem}[section]
\newtheorem {claim}[theorem]{Claim}
\newtheorem {condition}[theorem]{Condition}
\newtheorem {corollary}[theorem]{Corollary}
\newtheorem {definition}[theorem]{Definition}
\newtheorem {lemma}[theorem]{Lemma}
\newtheorem {proposition}[theorem]{Proposition}
\numberwithin{equation}{section}
\begin{document}
\title[Almost Linear Equations]{$C^{2,\alpha}$ estimates for solutions to almost Linear elliptic equations}
\author{Arunima Bhattacharya AND Micah Warren }

\begin{abstract}
In this paper, we show explicit $C^{2,\alpha}$ interior estimates for
viscosity solutions of fully non-linear, uniformly elliptic equations, which
are close to linear equations and we also compute an explicit bound for the closeness.

\end{abstract}
\maketitle

\section{Introduction}

In this paper, we derive an a priori interior $C^{2,\alpha}$ estimate for
viscosity solutions of the non-linear, uniformly elliptic equation%
\begin{equation}
F(D^{2}u)=f(x), \label{F}%
\end{equation}
under the assumption that $f(x)\in C^{\alpha}$ and $F$ is $C^{1}$-close to a
linear operator.

For viscosity solutions of second order, fully non-linear equations of the
form
\begin{equation}
F(D^{2}u)=0 \label{f}%
\end{equation}
where $F$ is concave and uniform elliptic, the landmark estimate is that of
Krylov and Evans, who proved $C^{2,\alpha}$ estimates from $C^{2}$ estimates
\cite{K}, \cite{E}. For general $F$, the fundamental results on the regularity
of solutions to fully non-linear uniformly elliptic equations of the form
(\ref{f}) include interior $C^{\alpha}$ estimates of \cite{KS} and interior
$C^{1,\alpha}$ estimate of \cite{CC}. The structure of $F$ plays a key role in
deriving higher order estimates for fully non-linear elliptic equations of the
forms (\ref{F}) and (\ref{f}). In \cite{NV}, the authors produced
counterexamples to Evans-Krylov type estimates for general fully non-linear
equations. In fact, solutions need not even be $C^{1,1},$ \cite{NVSL}.

Prior to Krylov and Evans, few fully non-linear equations where known to enjoy
a $C^{1,1}$ to $C^{2,\alpha}$ regularity boost. The Monge-Amp\`{e}re equation
was shown to have this property (even stronger, $C^{1,1}$ to $C^{3})$
following Calabi's calculation \cite{C}. Other results, requiring stronger
conditions on $D^{2}F,$ are mentioned in \cite[pg 335.]{E}. If the linearized
operator for $F$ satisfies a Cordes-Nirenberg condition, one can also obtain
this boosting (see Section \ref{cnsection}). Since the 1980s, it has been a
challenge to find equations with the regularity boosting property that are
niether convex nor concave, see for example \cite{CafYuan}, \cite{Yuan2001},
\cite{CC03} , \cite{Collins}, \cite{StreetsWarren}, \cite{Pingali}. Savin
\cite{SV} proved interior $C^{2,\alpha}$ (and higher) estimates for viscosity
solutions of (\ref{f}) that are sufficiently close to a quadratic polynomial,
for $F$ smooth. When full regularity is not available, partial regularity
results can be found, see \cite{SSA}.

Here, we consider a space of uniformly elliptic, non-linear equations of the
forms (\ref{f}) and (\ref{F}) where we assume that $F$ is uniformly
differentiable and $DF$ lies in a set of diameter $\varepsilon_{0}$. We
formally define this property of $F$ in definition \ref{definition 1.1}. We
show that given ellipticity constants and an $\alpha$ $\in(0,1)$ of your
choice, there is a universal constant $\varepsilon_{0}\left(  n,\lambda
,\Lambda,\alpha\right)  $ guaranteeing $C^{2,\alpha}$ regularity.

\bigskip Differentiating (\ref{f}) with respect to a direction $i$, one sees
that $u_{i}$ solves a linear equation with bounded measurable coefficients
(now depending on $x,$ not $D^{2}u)$. One then hopes to achieve $C^{1,\alpha}$
estimates on $u_{i},$ yielding $C^{2,\alpha}$ estimates on $u.$ In particular,
it may be possible to apply estimates of Cordes and Nirenberg from the 1950s:
Any solution $v$ of a linear equation
\begin{equation}
a^{ij}(x)v_{ij}(x)=0 \label{basiclinear}%
\end{equation}
with coefficients close to $\delta^{ij}$ will enjoy $C^{1,\alpha}$ regularity.
Thus when a solution is already $C^{3},$ universal interior $C^{2,\alpha}$
estimates should follow by the Cordes-Nirenberg theory. A delicate analysis of
the Dirichlet boundary value problem, approximating $u$ with $C^{3}$
mollifications on the boundary should also yield the estimates when $u$ is not
known to be $C^{3},$ cf.\cite[Section 7]{E}. The closeness constants of
Cordes-Nirenberg are explicit and mildly restrictive, in fact much less
restrictive than ours. As the historical literature is not widely known, we
discuss the Cordes-Nirenberg results in more detail in Section \ref{cnsection}.

Note that our result is stated for every $\alpha\in(0,1)$. Also, note that for
equation (\ref{F}) one cannot hope to differentiate either side of (\ref{F})
if the right hand side is merely $C^{\alpha},$ so the regularity theory cannot
be immediately reduced to the Cordes-Nirenberg theory. Our methods for proving
\ref{Theorem 1.2} are much different in nature than the proof of Cordes and
Nirenberg: we use the method of constructing approximating polynomials,
instead of integral estimates. In Theorem \ref{Theorem 1.3}, we prove interior
$C^{2,\alpha}$ estimates for solutions of (\ref{F}) using our $C^{2,\alpha}$
estimates for (\ref{f}) together with estimates found in \cite{CC}.

This paper is divided into the following sections. In the remainder of this
section we state definitions and our main results. In section 2, we prove
Theorem \ref{Theorem 1.2} and in section 3, we prove Theorem \ref{Theorem 1.3}%
. In section 4 we explicity state and prove an often used result involving
H\"{o}lder estimates and in section 5 we further discuss the Cordes-Nirenberg
regularity and some applications of Cordes-Nirenberg regularity to equations
of the form (\ref{f}).

\subsection{Definitions and notations}

We first define a few terms that we will be using to state the properties of
the operator $F$.

\begin{condition}
Throughout this paper we make the assumption
\begin{equation}
F(\mathbf{0})=0. \label{condi1}%
\end{equation}

\end{condition}

\begin{definition}
\label{definition 1.1} We define the uniformly elliptic, non-linear operator
$F$ to be \textbf{almost linear with constant $\varepsilon$} if
\begin{equation}
\left\Vert DF(M)-DF(N)\right\Vert \leq\varepsilon\label{Cm1}%
\end{equation}
for all $M,N\in S_{n}$ where $S_{n}$ is the space of all real symmetric
$n\times n$ matrices. We define $\varepsilon$ to be the \textbf{closeness
constant} of $F$.

We say that $F$ is $\lambda,\Lambda$ elliptic if
\[
F(M)+\lambda\left\Vert P\right\Vert \leq F(M+P)\leq F(M)+\Lambda\left\Vert
P\right\Vert
\]
for all positive matrices $P.$ To be clear, for matrices and their dual
(linear operators) we use $\left\Vert {}\right\Vert $ to denote the $\left(
L^{2},L^{2}\right)  $ norm, that is
\[
\left\Vert M\right\Vert =\sup_{x\leq1}\left\Vert Mx\right\Vert .
\]

\end{definition}

\begin{theorem}
\label{Theorem 1.2} Given $\lambda$, $\Lambda$, and $0<\bar{\alpha}<1$ there
exists a universal constant $\varepsilon_{0}(n,\lambda,\Lambda,\bar{\alpha
})>0$ such that if $F$ is almost linear with constant $\varepsilon_{0}$ and
$u\in C(B_{1})$ is a viscosity solution of (\ref{f}) on $B_{1}$, then $u\in
C^{2,\bar{\alpha}}(B_{\frac{1}{4\Lambda}})$ and satisfies the following
estimate
\begin{equation}
||D^{2}u||_{C^{\bar{\alpha}}(B_{\frac{1}{4\Lambda}})}\leq C_{1}%
||u||_{L^{\infty}(B_{1})} \label{EE}%
\end{equation}
where
\begin{equation}
C_{1}=\left(  1+n+4n^{2}+\frac{1}{\lambda}\varepsilon_{0}\frac{25}{8}%
n^{2}\right)  (1+\frac{3}{1-r_{0}^{\bar{\alpha}}})\frac{2^{\bar{\alpha}}%
}{r_{0}^{1+\bar{\alpha}}}\Lambda^{2+\bar{\alpha}}\left(  2+2^{2+\bar{\alpha}%
}\right)  ^{2}. \label{con1}%
\end{equation}
The constant $\varepsilon_{0}$ is determined in (\ref{eps_0}), (\ref{Vep})
\end{theorem}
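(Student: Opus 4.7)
The strategy is a classical improvement-of-flatness iteration with quadratic polynomial approximation, in the spirit of Caffarelli and Savin. At an arbitrary interior point $x_0\in B_{1/(4\Lambda)}$ I would construct, for each $k\ge 0$, a quadratic polynomial $P_k$ with $F(D^2P_k)=0$ satisfying
\begin{equation*}
\sup_{B_{r_0^k}(x_0)}\lvert u-P_k\rvert \;\le\; M\, r_0^{k(2+\bar\alpha)},
\end{equation*}
with $\|D^2(P_{k+1}-P_k)\|$ decaying like $r_0^{k\bar\alpha}$. The telescoping differences then produce a second-order Taylor polynomial at $x_0$ whose Hessian is $\bar\alpha$-Hölder continuous in $x_0$; the geometric series in $r_0^{\bar\alpha}$ arising here is exactly the source of the factor $(1-r_0^{\bar\alpha})^{-1}$ in (\ref{con1}).

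For the iteration step I would rescale
\begin{equation*}
v(y)\;:=\;\frac{u(x_0+r_0^k y)-P_k(x_0+r_0^k y)}{M\, r_0^{k(2+\bar\alpha)}},\qquad y\in B_1,
\end{equation*}
which satisfies $F_k(D^2 v)=0$ in the viscosity sense for a shifted and rescaled operator $F_k$ that remains $(\lambda,\Lambda)$-elliptic and almost linear with the same constant $\varepsilon_0$, since the oscillation condition (\ref{Cm1}) on $DF$ is invariant under these normalizations. Writing $L_k := DF_k(\mathbf{0})$, the integrated form of (\ref{Cm1}) gives
\begin{equation*}
\lvert F_k(N)-L_k(N)\rvert \;\le\; \varepsilon_0\,\|N\|\qquad\text{for every } N\in S_n.
\end{equation*}
Solving the Dirichlet problem $L_k h = 0$ on $B_{1/2}$ with $h=v$ on $\partial B_{1/2}$ and using Pucci-extremal comparison together with the standard $L^\infty$ bound on $v$ yields $\|v-h\|_{L^\infty(B_{1/2})} \le C(n,\lambda,\Lambda)\,\varepsilon_0/\lambda$. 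Because $L_k$ has constant coefficients, $h$ is analytic with $\sup_{B_{r_0}}\lvert h-Q\rvert \le C(n,\lambda,\Lambda)\, r_0^{3}$ for a quadratic $Q$ satisfying $L_k Q = 0$; rescaling back produces the improved polynomial $P_{k+1}$, after absorbing a small trace correction of size $\varepsilon_0\|Q\|$ needed to restore the exact equation $F(D^2P_{k+1})=0$.

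The main obstacle is quantitative bookkeeping. To close the induction one needs
\begin{equation*}
C(n,\lambda,\Lambda)\bigl(\varepsilon_0/\lambda + r_0^{3}\bigr) \;\le\; r_0^{2+\bar\alpha},
\end{equation*}
which first fixes $r_0 = r_0(n,\lambda,\Lambda,\bar\alpha)$ and then forces $\varepsilon_0 \lesssim r_0^{2+\bar\alpha}$, consistent with the explicit formulas (\ref{eps_0}), (\ref{Vep}). A secondary issue is the base case $k=0$, where an explicit flatness bound on $B_1$ comparing $u$ with an initial quadratic $P_0$ satisfying $F(D^2P_0)=0$ has to be extracted directly from the hypothesis $\|u\|_{L^\infty(B_1)}\le 1$; tracking the resulting Hessian cofactor arithmetic and the $1/\lambda$ loss from almost-linearity produces the prefactor $\bigl(1+n+4n^2+\lambda^{-1}\varepsilon_0\tfrac{25}{8}n^2\bigr)$ in (\ref{con1}). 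Once these pointwise second-order expansions are in hand at every $x_0\in B_{1/(4\Lambda)}$ with a common decay rate, the general Hölder-estimate lemma announced for Section 4 converts the pointwise decay into the $C^{\bar\alpha}$ bound on $D^2u$ with the explicit constant $C_1$ appearing in (\ref{EE}).
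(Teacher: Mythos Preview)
Your iteration skeleton and your handling of the telescoping sum, the trace correction to restore $F(D^2P_{k+1})=0$, and the final passage from pointwise $(2+\bar\alpha)$-decay to a genuine $C^{2,\bar\alpha}$ estimate all match the paper's Proposition~\ref{prop} and Lemma~\ref{cc8}. The substantive gap is in the comparison step, and it is exactly the technical point on which the paper's Lemma~\ref{lemma1} turns.

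You write: ``Solving the Dirichlet problem $L_k h = 0$ on $B_{1/2}$ with $h=v$ on $\partial B_{1/2}$ and using Pucci-extremal comparison together with the standard $L^\infty$ bound on $v$ yields $\|v-h\|_{L^\infty(B_{1/2})} \le C(n,\lambda,\Lambda)\,\varepsilon_0/\lambda$.'' This estimate does not follow. The comparison argument gives, for $w=v-h$, a Pucci inequality with right-hand side $|F_k(D^2h)|\le \varepsilon_0\|D^2h\|$, and to absorb that by ABP or a quadratic barrier you need control of $\|D^2h\|$ on the \emph{closed} ball $\bar B_{1/2}$. But $v$ is merely a continuous viscosity solution, so the constant-coefficient solution $h$ with boundary data $v$ has no second-derivative bound up to $\partial B_{1/2}$; $D^2h$ typically blows up there, and it is not even in $L^n(B_{1/2})$ when the boundary data is only $C^{\alpha_0}$ with small $\alpha_0$. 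Retreating to a smaller ball where $D^2h$ is bounded does not help either, because $v-h$ is then nonzero on that boundary and you have no estimate for it.

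The paper resolves this by \emph{mollifying} the boundary data: one solves $\Delta h=0$ in $B_{4/5}$ with $h=u^\gamma$ on $\partial B_{4/5}$, so that the global $C^2$ estimate (\ref{boundaryestimate}) gives $\|D^2h\|_{L^\infty(\bar B_{4/5})}\le K_2 M/\gamma^3$. One then pays two prices: an $O(\varepsilon_0/\gamma^3)$ error from the comparison, and an $O(\gamma^{\alpha_0})$ error from $\|u-u^\gamma\|$, the latter controlled by the Krylov--Safonov exponent $\alpha_0$ via (\ref{KS1}). Balancing $\gamma$ against $\varepsilon_0$ is what produces the rather intricate formula (\ref{Vep}); in particular $\alpha_0$ and the Krylov--Safonov constant $K_1$ enter explicitly, which your sketch does not anticipate. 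A secondary point: the paper does not carry a varying $L_k$ through the iteration but instead first makes a linear change of independent variable (via the square root $A$ of $DF(\mathbf 0)^{-1}$) to reduce once and for all to an operator close to the Laplacian; this is where the factor $\Lambda^{2+\bar\alpha}$ and the radius $1/(4\Lambda)$ in the final statement come from.
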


\begin{theorem}
\label{Theorem 1.3} Given $\lambda$, $\Lambda$, and $0<\alpha<\bar{\alpha}<1$,
suppose that $F$ is almost linear with constant $\varepsilon_{0}$ for the same
constant $\varepsilon_{0}(n,\lambda,\Lambda,\bar{\alpha})$ as in Theorem
\ref{Theorem 1.2} and $u\in C(B_{1})$ is viscosity solution of (\ref{F}) on
$B_{1}$. If $f\in C^{\alpha}(B_{1})$, then $u\in C^{2,\alpha}(B_{1/2})$ and
the following estimate holds
\begin{equation}
||u||_{C^{2,\alpha}(B_{1/2})}\leq C_{2}(\left\Vert u\right\Vert _{L^{\infty
}(B_{1})}+\left\Vert f\right\Vert _{C^{\alpha}(B_{1})}) \label{AAA}%
\end{equation}
where $C_{2}$ depends on $n,\lambda,\Lambda,C_{1}$, $\alpha,\bar{\alpha}$.
\end{theorem}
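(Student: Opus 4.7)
The plan is to use a frozen-coefficient perturbation argument combining Theorem \ref{Theorem 1.2} applied to the homogeneous equation at each frozen base point with maximum-principle estimates from \cite{CC} applied to the difference. The key observation is that for any $x_0$ the operator $\tilde F(M):=F(M)-f(x_0)$ satisfies $D\tilde F=DF$ and so is almost linear with the same closeness constant $\varepsilon_0$; hence Theorem \ref{Theorem 1.2} applies to viscosity solutions of $\tilde F(D^2 v)=0$.

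First I would fix $x_0\in B_{1/2}$ and a radius $r>0$ with $B_r(x_0)\subset B_1$, and let $v$ solve the Dirichlet problem $F(D^2 v)=f(x_0)$ in $B_r(x_0)$ with $v=u$ on $\partial B_r(x_0)$ (solvability via \cite{CC}). Because $F$ is uniformly elliptic, the difference $w=u-v$ satisfies $\mathcal{P}^{-}_{\lambda,\Lambda}(D^2 w)\leq f(x)-f(x_0)\leq \mathcal{P}^{+}_{\lambda,\Lambda}(D^2 w)$ in the viscosity sense, with $w=0$ on $\partial B_r(x_0)$. The Alexandroff--Bakelman--Pucci estimate from \cite{CC} will then give
\[
\|u-v\|_{L^\infty(B_r(x_0))}\leq C(n,\lambda,\Lambda)\,[f]_{C^\alpha(B_1)}\,r^{2+\alpha}.
\]
At the same time, Theorem \ref{Theorem 1.2} applied to $v$ (after translation and rescaling) will produce, for any $\theta\in(0,1/(4\Lambda))$, a quadratic polynomial $P$ with
\[
\|v-P\|_{L^\infty(B_{\theta r}(x_0))}\leq C_1\,\theta^{2+\bar\alpha}\,\|v\|_{L^\infty(B_r(x_0))}.
\]
Combining these two estimates with $\|v\|_{L^\infty(B_r)}\leq \|u\|_{L^\infty(B_r)}+Cr^{2+\alpha}[f]_{C^\alpha}$ produces a quadratic $P$ approximating $u$ on $B_{\theta r}(x_0)$ with errors controlled by $C_1\,\theta^{2+\bar\alpha}\|u\|_{L^\infty(B_r)}$ and $C\,r^{2+\alpha}[f]_{C^\alpha(B_1)}$.

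Next I would choose $\theta$ small enough that $C_1\,\theta^{2+\bar\alpha}\leq \tfrac12\,\theta^{2+\alpha}$, which is possible precisely because $\bar\alpha>\alpha$. Iterating this one-step improvement at geometric scales $r_k=\theta^k$ should produce, inductively, a sequence of quadratic polynomials $P_k$ satisfying
\[
\|u-P_k\|_{L^\infty(B_{r_k}(x_0))}\leq C\,r_k^{2+\alpha}\bigl(\|u\|_{L^\infty(B_1)}+\|f\|_{C^\alpha(B_1)}\bigr)
\]
together with consecutive-increment bounds $\|D^2 P_{k+1}-D^2 P_k\|\leq C\,r_k^\alpha$. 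The $P_k$ will converge to a quadratic $P_\infty$ whose Hessian is $D^2 u(x_0)$, and the geometric decay translates into the pointwise $C^{2,\alpha}$ estimate at $x_0$. Running this uniformly in $x_0\in B_{1/2}$ and invoking the Campanato-type characterization of $C^{2,\alpha}$ will yield the global estimate \eqref{AAA}.

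The hard part will be the careful bookkeeping of constants through the iteration, in particular ensuring that the geometric gain $C_1\,\theta^{2+\bar\alpha}$ coming from Theorem \ref{Theorem 1.2} strictly dominates the ``no-gain'' perturbation error $r^{2+\alpha}$ coming from the $C^\alpha$ right-hand side; this is exactly where the strict inequality $\bar\alpha>\alpha$ is essential, and it forces the final Hölder exponent to be $\alpha$ rather than $\bar\alpha$.
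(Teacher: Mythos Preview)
Your proposal is correct and follows essentially the same route as the paper: at each scale solve the homogeneous Dirichlet problem, bound $u-v$ by a maximum-principle/ABP estimate (the paper packages this step as \cite[Lemma~7.9]{CC}), apply Theorem~\ref{Theorem 1.2} to the homogeneous solution, and iterate at a geometric scale $\theta$ chosen so that $C_1\theta^{2+\bar\alpha}\le\tfrac12\theta^{2+\alpha}$, which is exactly the paper's condition (\ref{L2}). The only wrinkle is that $\tilde F(M)=F(M)-f(x_0)$ violates the standing hypothesis $\tilde F(\mathbf 0)=0$ needed for Theorem~\ref{Theorem 1.2}, so before invoking that theorem you should subtract a paraboloid $\tfrac{t_0}{2}|x-x_0|^2$ with $F(t_0 I)=f(x_0)$ (ellipticity gives $|t_0|\le |f(x_0)|/\lambda$); this shift is harmlessly absorbed into the approximating polynomials $P_k$.
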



The methods involved in our proof include comparing equation (\ref{f}) to the
Laplace equation with boundary data equal to a mollification of $u$. We use
the Krylov-Safanov Theorem \cite{KS} along with harmonic estimates to
construct a quadratic polynomial that separates from $u$ to order
$r^{2+\alpha}$ on the ball of radius $r$. This is used in the construction of
an iterative sequence of quadratic polynomials that leads to our desired
estimate in the first theorem. The proof of Theorem \ref{Theorem 1.3} uses
arguments from $W^{2,p}$ regularity found in \cite[Chapter 7]{CC} .

\section{Proof of Theorem \ref{Theorem 1.2}}

By calculus,
\begin{align*}
F(N)-F(\mathbf{0)}  &  =\int_{0}^{1}DF(tN)\cdot Ndt\\
&  =\int_{0}^{1}\left(  DF(tN)-DF(\mathbf{0})\right)  \cdot Ndt+DF(\mathbf{0}%
)\cdot N.
\end{align*}
Thus, from (\ref{condi1}) and (\ref{Cm1})%
\begin{equation}
\left\vert F(N)-DF(\mathbf{0})\cdot N\right\vert \leq\varepsilon_{0}\left\Vert
N\right\Vert . \label{Cm22}%
\end{equation}
With this is mind we begin with the following Lemma.

\begin{lemma}
\label{lemma1}Given $\bar{\alpha},\lambda,\Lambda$ there exist universal
constants $\tilde{\varepsilon}_{0}(n,\lambda,\Lambda,\bar{\alpha})>0$ and
$r_{0}(n,\bar{\alpha})>0$, such that if the $\lambda,\Lambda$ elliptic
operator $F$ satisfies
\begin{equation}
|F(N)-tr(N)|<\tilde{\varepsilon}_{0}\left\Vert N\right\Vert \label{CM0}%
\end{equation}
for all $N\in S_{n}$, then for any viscosity solution $u\in C(B_{1})$ of
(\ref{f}) in $B_{1}(0)$, we can find a polynomial $P$ of degree 2 satisfying
\begin{align}
F(D^{2}P)  &  =0\nonumber\\
\sup_{B_{r_{0}}}|u-P|  &  \leq r_{0}^{2+\bar{\alpha}}||u||_{L^{\infty}(B_{1}%
)}\nonumber\\
||P||_{L^{\infty}(B_{1})}  &  \leq C_{0}||u||_{L^{\infty}(B_{1})}. \label{lem}%
\end{align}
We compute the explicit values of the universal constants to be

\begin{enumerate}
[label=(\roman*)]{}{}

\item $r_{0}=\left(  \frac{3}{250n^{3}}\right)  ^{\frac{1}{1-\bar{\alpha}}}$

\item $C_{0}=1+n+4n^{2}+\frac{1}{\lambda}\tilde{\varepsilon}_{0}\frac{25}%
{8}n^{5/2}$

\item $\tilde{\varepsilon}_{0}=\min\left\{  \lambda\frac{2}{25n^{2}}%
r_{0}^{\bar{\alpha}},\left(  \frac{1}{2}\right)  ^{1+6/\alpha_{0}}%
\frac{\lambda}{K_{2}}\frac{1}{K_{1}^{3/\alpha_{0}}}r_{0}^{\left(
2+\bar{\alpha}\right)  (1+3/\alpha_{0})}\right\}  $ where $K_{1}$, $\alpha
_{0}$, $K_{2}$ are defined in (\ref{KS}), (\ref{fwalp}),and (\ref{K2p}) respectively.
\end{enumerate}
\end{lemma}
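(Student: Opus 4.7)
The plan is to approximate $u$ on a smaller ball by a harmonic function $h$, extract a quadratic Taylor polynomial of $h$ at the origin, and then perturb that polynomial by a small multiple of $|x|^2$ to enforce $F(D^2P)=0$ on the nose.

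First, by the Krylov--Safonov estimate applied to $u$, one has a universal H\"older bound $\|u\|_{C^{\alpha_0}(B_{1/2})}\le K_1\|u\|_{L^\infty(B_1)}$ for some $\alpha_0=\alpha_0(n,\lambda,\Lambda)$. Fix a radius $\rho<1$, let $\tilde u$ be a mollification of $u$ at scale $\delta$, and let $h$ solve $\Delta h=0$ in $B_\rho$ with $h=\tilde u$ on $\partial B_\rho$. Interior harmonic estimates on $h$ yield $\|D^3 h\|_{L^\infty(B_{\rho/2})}\le K_2\|u\|_{L^\infty(B_1)}\delta^{-3}$, while the H\"older bound on $u$ controls the boundary mismatch $\|u-\tilde u\|_{L^\infty(\partial B_\rho)}\le K_1\delta^{\alpha_0}\|u\|_{L^\infty(B_1)}$.

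To compare $u$ and $h$ in the interior, I would write
\[
a^{ij}(x)\,\partial_{ij}(u-h)\;=\;F(D^2u)-F(D^2h)\;=\;-F(D^2h),
\]
where $a^{ij}(x)=\int_0^1\partial_{M_{ij}}F\bigl(tD^2u+(1-t)D^2h\bigr)\,dt$ lies in the $\lambda,\Lambda$ ellipticity class; the hypothesis \eqref{CM0} gives the pointwise bound $|F(D^2h)|\le\tilde\varepsilon_0\|D^2h\|$. A barrier argument with $\tilde\varepsilon_0\|D^2h\|_\infty|x|^2/(2n\lambda)$ then controls $\|u-h\|_{L^\infty(B_\rho)}$ by the boundary mismatch plus an $\tilde\varepsilon_0/\lambda$ factor times interior harmonic $C^2$ data. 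Letting $P_0$ denote the degree-two Taylor polynomial of $h$ at the origin, Taylor's theorem yields $\sup_{B_r}|h-P_0|\le\tfrac16 r^3\|D^3h\|_{L^\infty(B_\rho)}$, and interior harmonic estimates also bound $\|P_0\|_{L^\infty(B_1)}$ by a universal multiple of $\|u\|_{L^\infty(B_1)}$ (giving the $1+n+4n^2$ piece of $C_0$).

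To produce $P$ with $F(D^2P)=0$, I use $\lambda,\Lambda$-ellipticity: since $|F(D^2P_0)|\le\tilde\varepsilon_0\|D^2P_0\|$, there is a unique real $t$ with $|t|\le\tilde\varepsilon_0\|D^2P_0\|/\lambda$ for which $F(D^2P_0+tI_n)=0$, and I set $P(x)=P_0(x)+\tfrac{t}{2}|x|^2$; this perturbation contributes the $\tfrac{1}{\lambda}\tilde\varepsilon_0\tfrac{25}{8}n^{5/2}$ term in $C_0$. Summing the three error contributions $\|u-h\|_\infty$, $\|h-P_0\|_{B_{r_0}}$ and $\|P_0-P\|_{B_{r_0}}$, and forcing the total to be at most $r_0^{2+\bar\alpha}\|u\|_{L^\infty(B_1)}$, dictates the simultaneous choice of $r_0$, $\delta$ and $\tilde\varepsilon_0$. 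The main technical obstacle is precisely this balancing: the harmonic $C^3$ bound blows up like $\delta^{-3}$ while the boundary mismatch shrinks only like $\delta^{\alpha_0}$, so $\delta$ must be chosen as a specific power of $r_0$, and then $\tilde\varepsilon_0$ must be small enough to absorb the resulting $\tilde\varepsilon_0\delta^{-3}$ factor into $r_0^{2+\bar\alpha}$. Tracking all the universal constants through these competing inequalities is what produces the explicit formulas (i)--(iii).
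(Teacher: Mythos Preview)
Your approach is essentially the one in the paper: solve the Laplace equation with mollified boundary data, take the quadratic Taylor polynomial of $h$, correct it by a multiple of $|x|^2$ to hit $F(D^2P)=0$, and balance the mollification scale against~$\tilde\varepsilon_0$. Two technical points, however, are not quite right as written.

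First, the identity $a^{ij}(x)\partial_{ij}(u-h)=-F(D^2h)$ is not available: $u$ is only a viscosity solution, so $D^2u$ need not exist pointwise and the integral defining $a^{ij}$ is formal. The paper avoids this by never linearizing. It shows directly that $h\pm\dfrac{\tilde\varepsilon_0}{2\lambda}K_2M\gamma^{-3}(1-|x|^2)$ are, respectively, a viscosity super- and subsolution of $F=0$ on $B_{4/5}$ (using $|F(D^2h)|\le\tilde\varepsilon_0\|D^2h\|$ and $\lambda$-ellipticity), and then invokes the viscosity comparison principle against $u$. Your barrier idea is the right one, but it has to be phrased this way.

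Second, you have the $\delta^{-3}$ on the wrong estimate. The \emph{interior} third-derivative bound for $h$ carries no mollification dependence: standard harmonic estimates give $\|D^3h\|_{L^\infty(B_{\rho/2})}\le C_n\|h\|_{L^\infty}\le C_n M$, and this clean bound is what fixes $r_0=(3/(250n^3))^{1/(1-\bar\alpha)}$. The $\delta^{-3}$ enters only in the \emph{global} bound $\|D^2h\|_{L^\infty(\bar B_\rho)}$, because the barrier argument must control $|F(D^2h)|\le\tilde\varepsilon_0\|D^2h\|$ all the way up to the boundary of the domain where you compare. For that the paper invokes a boundary $C^2$ estimate (\cite[Theorem~9.5]{CC}), $\|D^2h\|_{L^\infty(\bar B_{4/5})}\le C'\|u^\gamma\|_{C^3(\partial B_{4/5})}$, and then bounds $\|u^\gamma\|_{C^3}\le M\gamma^{-3}\int|D^3\eta|$, which is where $K_2$ and the $\gamma^{-3}$ originate. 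If you route $\delta^{-3}$ through $\|D^3h\|$ as you wrote, the Taylor error $\sup_{B_{r_0}}|h-P_0|$ would acquire a spurious $\delta^{-3}$ factor and the balancing in (i)--(iii) would not close.
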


The required constant $\alpha_{0}$ is defined in the proof of the Lemma, and
will require the Krylov-Safanov Theorem, so we state that here.

\begin{theorem}
\cite[Theorem 1]{KS} [Krylov-Safanov] Let $u\in C^{0}$ be a viscosity solution
of $S(\frac{\lambda}{n},\Lambda,0)=0$ in $B_{1}$. Then $u$ is H\"{o}lder
continuous and
\begin{equation}
||u||_{C^{\alpha_{0}}(B_{1/2})}\leq C(\frac{\lambda}{n},\Lambda
)||u||_{L^{\infty}(B_{1})} \label{fwalp}%
\end{equation}
with (small) $\alpha_{0}=\alpha_{0}(\frac{\lambda}{n},\Lambda)>0$.
\end{theorem}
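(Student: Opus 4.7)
The plan is to prove (\ref{fwalp}) by deducing Hölder continuity from a Harnack inequality for the Pucci extremal class, following the standard Krylov--Safanov / Caffarelli program. Since $u \in S(\lambda/n,\Lambda,0)$ means (in the viscosity sense) both $\mathcal{M}^{+}_{\lambda/n,\Lambda}(D^{2}u)\ge 0$ and $\mathcal{M}^{-}_{\lambda/n,\Lambda}(D^{2}u)\le 0$, such $u$ will inherit both the weak and strong Harnack properties of the extremal operators. The endgame is then routine: applying Harnack to $M_{r}-u$ and $u-m_{r}$, where $M_{r},m_{r}$ are the max and min of $u$ on $B_{r}$, on a dyadic sequence $B_{1/2},B_{1/4},\ldots$ yields an oscillation decay $\operatorname{osc}_{B_{r/2}}u \le (1-\theta)\operatorname{osc}_{B_{r}}u$ with universal $\theta\in(0,1)$; iteration produces (\ref{fwalp}) with $\alpha_{0}=\log\tfrac{1}{1-\theta}/\log 2$.

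Hence the real work is the Harnack inequality, which I would build in three classical steps. First, I would establish the Aleksandrov--Bakelman--Pucci (ABP) maximum principle in viscosity form: if $\mathcal{M}^{+}(D^{2}v)\ge -g$ in $B_{1}$ and $v\ge 0$ on $\partial B_{1}$, then $\sup_{B_{1}}v^{-} \le C(n,\lambda,\Lambda)\,\|g\|_{L^{n}(\{v=\Gamma_{v}\})}$, where $\Gamma_{v}$ is the concave envelope of $-v^{-}$. To legitimize this for merely continuous viscosity solutions, I would pass to sup/inf-convolutions, which are semiconcave / semiconvex and therefore twice differentiable a.e.\ by Aleksandrov's theorem, then let the regularization parameter tend to zero. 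Second, I would combine ABP with a fixed barrier $\varphi$ (nonpositive on $\overline{B_{2\sqrt{n}}}$, strictly negative on the unit cube $Q_{1}$, with $\mathcal{M}^{+}\varphi \le C\chi_{B_{1/4}}$) to derive the elementary measure-decay lemma: there exist universal $M>1$ and $\mu\in(0,1)$ such that any nonnegative supersolution $u$ with $\inf_{Q_{1}}u\le 1$ satisfies $|\{u\le M\}\cap Q_{3}|\ge \mu$. Third, I would iterate this single-scale decay via a Calderón--Zygmund stopping-time argument on dyadic cubes to obtain the $L^{\varepsilon}$ estimate $|\{u>t\}\cap Q_{1}|\le C t^{-\varepsilon}$ for a universal $\varepsilon>0$.

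Pairing the $L^{\varepsilon}$-estimate for supersolutions with the dual local maximum principle for subsolutions of $\mathcal{M}^{-}$ (proved by the same scheme applied to $-u$) produces the full Harnack inequality $\sup_{B_{1/2}}u \le C(n,\lambda,\Lambda)\inf_{B_{1/2}}u$ for nonnegative $u \in S(\lambda/n,\Lambda,0)$, closing the chain. The main obstacle is the second step: constructing a barrier whose Pucci extremal has the correct one-sided control, and then running ABP on a function ($u$) that is only a viscosity solution. Both difficulties are handled by Caffarelli's sup-convolution regularization, which reduces every viscosity statement to an a.e.\ punctual statement at the price of some careful but routine bookkeeping; the rest of the argument (the cube decomposition and the dyadic iteration of the oscillation decay) is combinatorial in nature and universal in the constants $n,\lambda,\Lambda$, so the resulting $\alpha_{0}$ depends only on those parameters, as required.
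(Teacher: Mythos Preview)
Your outline is the standard modern route to the Krylov--Safanov H\"older estimate for viscosity solutions (ABP $\Rightarrow$ measure-decay barrier lemma $\Rightarrow$ $L^{\varepsilon}$ estimate via Calder\'on--Zygmund cube decomposition $\Rightarrow$ Harnack $\Rightarrow$ oscillation decay), essentially the argument in Caffarelli--Cabr\'e, and it is correct as sketched.

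However, the paper does not give its own proof of this theorem: it is stated as a quotation of \cite[Theorem 1]{KS} and used as a black box in the proof of Lemma~\ref{lemma1} (specifically to obtain the H\"older bound (\ref{KS}) on $u$ and the mollifier convergence rate (\ref{KS1})). So there is nothing to compare against; your proposal simply supplies a proof where the paper chose to cite one.
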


We will apply the following result to the Laplace operator to determine the
constant $K_{2}.$ We state a weaker version than in \cite[Theorem 9.5]{CC}.

\begin{theorem}
\cite[Theorem 9.5]{CC} Let $g$ be a smooth function in $\bar{B}_{1}.$ If $u\in
C^{3}(\bar{B}_{1})$ is a solution of
\[%
\begin{cases}
\Delta u=0\hspace{0.2cm}\text{in }\bar{B}_{1}\\
u=g\text{ \ on }\partial B_{1}%
\end{cases}
\]
then
\begin{equation}
\left\Vert u\right\Vert _{C^{2}(\bar{B}_{1})}\leq C^{\prime}\left\Vert
g\right\Vert _{C^{3}(\partial B_{1}).} \label{boundaryestimate}%
\end{equation}
where $C^{\prime}$ is a universal constant.
\end{theorem}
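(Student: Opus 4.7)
The plan is to reduce this boundary estimate to a standard global Schauder estimate for the Poisson equation with zero Dirichlet data. The first step is to extend $g$ from the sphere into the ball: choose a linear extension operator $E:C^{3}(\partial B_{1})\to C^{3}(\bar{B}_{1})$ satisfying
\[
\|Eg\|_{C^{3}(\bar{B}_{1})}\le C_{n}\,\|g\|_{C^{3}(\partial B_{1})}.
\]
Such an $E$ is constructed in a standard way: cover $\partial B_{1}$ with finitely many coordinate charts that flatten the boundary, perform a Seeley‐type reflection extension in each chart, and patch via a smooth partition of unity subordinate to the cover. The universal constant $C_{n}$ depends only on the dimension and the fixed data of this covering.

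Next, set $\tilde g:=Eg$ and $v:=u-\tilde g$. Then $v\in C^{3}(\bar{B}_{1})$, $v=0$ on $\partial B_{1}$, and
\[
\Delta v=-\Delta\tilde g=:F\quad\text{in }B_{1},
\]
with $\|F\|_{C^{1}(\bar{B}_{1})}\le\|\tilde g\|_{C^{3}(\bar{B}_{1})}\le C_{n}\,\|g\|_{C^{3}(\partial B_{1})}$. In particular $\|F\|_{C^{1/2}(\bar{B}_{1})}\le C_{n}\,\|g\|_{C^{3}(\partial B_{1})}$. Now I would apply the classical global $C^{2,\alpha}$ Schauder estimate on the smooth domain $B_{1}$ (for instance with $\alpha=1/2$) to the function $v$, whose Dirichlet data vanish, yielding
\[
\|v\|_{C^{2,1/2}(\bar{B}_{1})}\le C_{n}\,\|F\|_{C^{1/2}(\bar{B}_{1})}\le C_{n}\,\|g\|_{C^{3}(\partial B_{1})}.
\]
Finally, since $u=v+\tilde g$, the triangle inequality gives
\[
\|u\|_{C^{2}(\bar{B}_{1})}\le\|v\|_{C^{2}(\bar{B}_{1})}+\|\tilde g\|_{C^{2}(\bar{B}_{1})}\le C^{\prime}\|g\|_{C^{3}(\partial B_{1})},
\]
which is (\ref{boundaryestimate}).

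The main technical obstacle is the construction of the extension operator $E$ with a clean universal bound; this is routine but requires some bookkeeping with charts, reflections, and partitions of unity. An alternative, more hands‑on route would use the explicit Poisson representation
\[
u(x)=\frac{1-|x|^{2}}{\omega_{n-1}}\int_{\partial B_{1}}\frac{g(y)}{|x-y|^{n}}\,d\sigma(y),
\]
noting that any tangential derivative of $u$ is again harmonic with boundary data the corresponding tangential derivative of $g$ (so is controlled by $\|g\|_{C^{2}(\partial B_{1})}$ through the maximum principle), and that the remaining normal second derivative at the boundary is recovered algebraically from $\Delta u=0$. This route works but requires care near $\partial B_{1}$, whereas the extension plus Schauder reduction above is cleaner and black‑boxes all of the boundary analysis into the classical Schauder theorem.
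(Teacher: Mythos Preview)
Your argument is correct: extending $g$ to $\bar B_1$, subtracting the extension, and invoking the global Schauder estimate for the Poisson problem with zero Dirichlet data is a clean and standard way to obtain the bound. The only small omission is that the Schauder estimate in its usual form carries a $\|v\|_{C^{0}}$ term on the right, but since $v$ vanishes on $\partial B_{1}$ this is absorbed into $\|F\|_{L^{\infty}}$ by comparison with the paraboloid $\tfrac{1}{2n}\|F\|_{L^{\infty}}(1-|x|^{2})$, so no gap remains.

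There is, however, nothing in the paper to compare your proof against: the authors do not prove this statement at all. They quote it as a (weakened) special case of \cite[Theorem~9.5]{CC}, which in Caffarelli--Cabr\'e is a global $C^{2,\alpha}$ boundary estimate for solutions of concave fully nonlinear equations $F(D^{2}u)=0$ with $C^{3}$ boundary data, proved by flattening the boundary and running a boundary version of the Evans--Krylov iteration. Applied to the Laplacian this machinery is massive overkill, and your reduction to classical Schauder theory is both more elementary and more transparent for the purpose at hand; the paper simply needs the estimate as a black box to control $\|D^{2}h\|_{L^{\infty}(\bar B_{4/5})}$ in the proof of Lemma~\ref{lemma1}.
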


\begin{proof}
[Proof of Lemma 2.1]Let's denote $||u||_{L^{\infty}(B_{1})}=M$. We consider a
function $h$ that satisfies the following boundary value problem:
\begin{equation}%
\begin{cases}
\Delta h=0\hspace{0.2cm}\text{in }\bar{B}_{4/5}\\
h=u^{\gamma}\text{ \ on }\partial B_{4/5}%
\end{cases}
.\nonumber
\end{equation}
Here $u^{\gamma}$ refers to a mollification of $u$ for some $\gamma\in
(0,1/5)$, defined by
\[
u^{\gamma}=\eta_{\gamma}\ast u
\]
where
\[
\eta_{\gamma}(x)=\frac{1}{\gamma^{n}}\eta(\frac{x}{\gamma})
\]
and $\eta\in C^{\infty}(\mathbb{R}^{n})$ is given by
\[
\eta(x)=%
\begin{cases}
C\exp\left(  \frac{1}{|x|^{2}-1}\right)  \hspace{0.2cm}if\hspace{0.2cm}|x|<1\\
0\hspace{2.2cm}if\hspace{0.2cm}|x|\geq1
\end{cases}
\]
with the constant $C>0$ being chosen such that $\int_{\mathbb{R}^{n}}\eta
dx=1$. Note that since $u$ is defined on all of $B_{1}$, the mollifier
sequence $u^{\gamma}$ is well defined on $B_{4/5}$ when $\gamma<1/5$ and that
\begin{equation}
||u^{\gamma}||_{L^{\infty}(B_{4/5})}\leq M. \label{basicb}%
\end{equation}
From the Krylov-Safanov theorem, we get the following estimate
\begin{equation}
||u||_{C^{\alpha_{0}}(B_{4/5})}\leq K_{1}M. \label{KS}%
\end{equation}

This implies that $u^{\gamma}$ converges to $u$ uniformly on $\bar{B_{4/5}}$
as $\gamma\rightarrow0$ and satisfies the following estimate:
\begin{equation}
||u^{\gamma}-u||_{L^{\infty}(B_{4/5})}\leq K_{1}\gamma^{\alpha_{0}}M.
\label{KS1}%
\end{equation}

Since $h$ is harmonic and thus analytic there exists a polynomial $P_{0}(x)$
of degree two
\[
P_{0}(x)=h(0)+x\cdot Dh(0)+x\cdot D^{2}h(0)x\label{PH}%
\]
such that for all $|x|<1/2$,
\[
|h(x)-P_{0}(x)|\leq|R_{3}(x)|
\]
where $R_{3}$ is the remainder term of order 3 in the Taylor series expansion
of $h$. Estimates for harmonic functions (cf. \cite[(2.31)]{GT}), considering
(\ref{basicb}) are of the form
\begin{align*}
\sup_{x\in B_{1/5}}\left\vert h_{ijk}(x)\right\vert  &  \leq\frac{n}{1/5}%
\sup_{x\in B_{2/5}}\left\vert h_{ij}(x)\right\vert \\
&  \leq5n\frac{n}{1/5}\sup_{x\in B_{3/5}}\left\vert h_{i}(x)\right\vert \\
&  \leq25n^{2}\frac{n}{1/5}\sup_{x\in B_{4/5}}\left\vert h(x)\right\vert \\
&  \leq125n^{3}M.
\end{align*}
Thus we have on $B_{1/5}$
\[
|h(x)-P_{0}(x)|\leq\frac{125}{3!}n^{3}M\left\vert x\right\vert ^{3}.\text{ }%
\]

Choosing%
\begin{equation}
r_{0}=\left(  \frac{3}{250n^{3}}\right)  ^{\frac{1}{1-\bar{\alpha}}}<<\frac
{1}{5}, \label{R}%
\end{equation}
we have
\begin{equation}
\sup_{B_{r_{0}}}|h(x)-P_{0}(x)|\leq\frac{1}{4}Mr_{0}^{2+\bar{\alpha}}.
\label{BBC}%
\end{equation}

Now from (\ref{CM0}) and $\Delta P_{0}=0$, we see that
\[
\left\vert F(D^{2}P_{0})\right\vert \leq\tilde{\varepsilon}_{0}\left\Vert
D^{2}P_{0}\right\Vert .
\]
So using $\lambda$-ellipticity, there is a $c\in\lbrack-\tilde{\varepsilon
}_{0},\tilde{\varepsilon}_{0}]$ such that the quadratic polynomial%
\begin{equation}
P(x)=P_{0}(x)+\frac{|x|^{2}}{2\lambda}c\left\Vert D^{2}h(0)\right\Vert
\label{difp}%
\end{equation}
satisfies
\[
F(D^{2}P)=0.\label{la}%
\]
Using harmonic estimates again we see that
\begin{equation}
\left\Vert D^{2}h(0)\right\Vert \leq\frac{25}{4}n^{2}M. \label{he}%
\end{equation}
Bringing in (\ref{difp}) we see
\begin{equation}
\sup_{B_{r_{0}}}|h-P|<\sup_{B_{r_{0}}}|h-P_{0}|+\frac{r_{0}{}^{2}}{2\lambda
}\tilde{\varepsilon}_{0}\frac{25}{4}n^{2}M. \label{ABC}%
\end{equation}
Insisting on a choice of $\tilde{\varepsilon}_{0}$ such that
\begin{equation}
\tilde{\varepsilon}_{0}\leq\frac{\lambda}{2}\frac{r_{0}^{\bar{\alpha}}}%
{\frac{25}{4}n^{2}}=\lambda\frac{2}{25n^{2}}\left(  \frac{3}{250n^{3}}\right)
^{\frac{\bar{\alpha}}{1-\bar{\alpha}}} \label{VA}%
\end{equation}
we conclude from (\ref{ABC}) and (\ref{BBC})
\begin{equation}
\sup_{B_{r_{0}}}|h-P|\leq\frac{1}{2}Mr_{0}^{2+\bar{\alpha}}. \label{abcd}%
\end{equation}
Again using harmonic estimates (\ref{he}), we get the following estimate for
$P$:
\begin{align}
||P||_{L^{\infty}(B_{1})}  &  \leq C_{0}M,\nonumber\\
C_{0}  &  =1+n+\frac{25}{4}n^{2}+\frac{1}{2\lambda}\tilde{\varepsilon}%
_{0}\frac{25}{4}n^{2}\label{C}\\
&  =1+n+\frac{25}{4}\left(  1+\frac{1}{2\lambda}\tilde{\varepsilon_{0}%
}\right)  n^{2}.
\end{align}

Next, by (\ref{CM0}) for $x\in B_{4/5}$ we have
\begin{align}
|F(D^{2}h(x))|  &  =|F(D^{2}h)-\Delta h+\Delta h)|\nonumber\\
&  =|F(D^{2}h)-Tr(D^{2}h)|\\
&  \leq\tilde{\varepsilon}_{0}||D^{2}h||_{L^{\infty}\bar{(B_{4/5})}}.
\label{NN}%
\end{align}

Now recall (\ref{boundaryestimate}):
\[
||D^{2}h||_{L^{\infty}\bar{(B_{4/5})}}\leq C^{\prime}||u_{\gamma}||_{C^{3}%
\bar{(B_{4/5})}}.
\]
We compute the value of $\left\Vert u_{\gamma}\right\Vert _{C^{3}\bar
{(B_{4/5})}}$.

Let $p$ be a multi-index such that $|p|=3$. For any $x\in\bar{B_{4/5}}$ we
observe the following:
\begin{align*}
|D^{p}(u_{\gamma}(x))|  &  =\left\vert D^{p}(\eta_{\gamma})\ast
u(x)\right\vert =\left\vert \int_{B_{1}}D^{p}\eta_{\gamma}%
(x-y)u(y)dy\right\vert \\
&  \leq\sup_{y\in B_{1}}|u(y)|\int_{B_{1}}|D^{p}\eta_{\gamma}(x-y)|dy\\
&  \leq M\int_{B_{1}}\left\vert \frac{1}{\gamma^{n+3}}D^{p}\eta(\frac
{x-y}{\gamma})\right\vert dy.
\end{align*}
We do a change of variable $z=\frac{x-y}{\gamma}$ to reduce the above
expression to
\[
\leq M\frac{1}{\gamma^{3}}\int_{B_{1}}|\frac{1}{\gamma^{n}}D^{p}\eta
(z)\gamma^{n}|dz=M\frac{1}{\gamma^{3}}\int_{B_{1}}|D^{p}\eta(z)|dz.
\]

This shows that
\begin{equation}
||D^{2}h||_{L^{\infty}\bar{(B_{4/5})}}\leq C^{\prime}M\frac{1}{\gamma^{3}}%
\sup_{\left\vert p\right\vert =3}\int_{\mathbb{R}^{n}}|D^{p}\eta(z)|dz.
\label{K}%
\end{equation}
Let's define
\begin{equation}
K_{2}=C^{\prime}\sup_{\left\vert p\right\vert =3}\int_{\mathbb{R}^{n}}%
|D^{p}\eta(z)|dz \label{K2p}%
\end{equation}
so that
\begin{equation}
||D^{2}h||_{L^{\infty}\bar{(B_{4/5})}}\leq K_{2}M\frac{1}{\gamma^{3}}.
\label{K2}%
\end{equation}

Using uniform ellipticity, (\ref{NN}), and (\ref{K2}) we see that the
following inequalities hold on $B_{4/5}$:
\begin{align*}
F(D^{2}h+D^{2}(\frac{\tilde{\varepsilon}_{0}}{2\lambda}K_{2}M\frac{1}%
{\gamma^{3}}(1-|x|^{2}))  &  \leq0.\\
F(D^{2}h-D^{2}(\frac{\tilde{\varepsilon}_{0}}{2\lambda}K_{2}M\frac{1}%
{\gamma^{3}}(1-|x|^{2}))  &  \geq0.
\end{align*}
Using comparison principles \cite[Theorem 17.1]{GT} and (\ref{KS1}) we see
that for all $x\in B_{4/5}$ we have:
\begin{equation}
|u(x)-h(x)|\leq K_{1}M\gamma^{\alpha_{0}}+\frac{\tilde{\varepsilon}_{0}%
}{2\lambda}K_{2}M\frac{1}{\gamma^{3}}. \label{BB}%
\end{equation}

On combining (\ref{BB}), (\ref{abcd}) we see that%
\begin{align}
\sup_{B_{r_{0}}}|u-P|  &  <\sup_{B_{r_{0}}}|u-h|+\sup_{B_{r_{0}}%
}|h-P|\nonumber\\
&  <K_{1}M\gamma^{\alpha_{0}}+\frac{\tilde{\varepsilon}_{0}}{2\lambda}%
K_{2}M\frac{1}{\gamma^{3}}+\frac{1}{2}Mr_{0}^{2+\bar{\alpha}}. \label{NW}%
\end{align}

The right hand side of (\ref{NW}) will be no greater than $Mr_{0}%
^{2+\bar{\alpha}}$ provided
\[
K_{1}\gamma^{\alpha_{0}}+\frac{\tilde{\varepsilon}_{0}}{2\lambda}K_{2}\frac
{1}{\gamma^{3}}\leq\frac{1}{2}r_{0}^{2+\bar{\alpha}}%
\]
for some choice of $\gamma$ and $\tilde{\varepsilon}_{0}$. While this could be
optimized with some messy calculus, we scare up constants as follows. Choose
\begin{equation}
\gamma=\left(  \frac{\frac{1}{4}r_{0}^{2+\bar{\alpha}}}{K_{1}}\right)
^{1/\alpha_{0}}\label{delta}%
\end{equation}
so that
\[
K_{1}\gamma^{\alpha_{0}}=\frac{1}{4}r_{0}^{2+\bar{\alpha}}%
\]
and then we want
\[
\frac{\tilde{\varepsilon}_{0}}{2\lambda}K_{2}\frac{1}{\left(  \frac{\frac
{1}{4}r_{0}^{2+\bar{\alpha}}}{K_{1}}\right)  ^{3/\alpha_{0}}}\leq\frac{1}%
{4}r_{0}^{2+\bar{\alpha}}%
\]
so we choose%
\begin{align}
\tilde{\varepsilon}_{0} &  \leq\frac{1}{4}r_{0}^{2+\bar{\alpha}}\frac
{2\lambda}{K_{2}}\left(  \frac{\frac{1}{4}r_{0}^{2+\bar{\alpha}}}{K_{1}%
}\right)  ^{3/\alpha_{0}}\nonumber\\
&  =\left(  \frac{1}{2}\right)  ^{1+6/\alpha_{0}}r_{0}^{\left(  2+\bar{\alpha
}\right)  (1+3/\alpha_{0})}\frac{\lambda}{K_{2}}\frac{1}{K_{1}^{3/\alpha_{0}}%
}\label{VE}%
\end{align}
where $K_{1}$, $\alpha_{0}$ and $K_{2}$ are defined in (\ref{KS}) and
(\ref{K2}) respectively and $r_{0}$ defined by (\ref{R}), From (\ref{VA}) and
(\ref{VE}) we see that
\begin{equation}
\tilde{\varepsilon}_{0}=\min\left\{
\begin{array}
[c]{c}%
\lambda\frac{2}{25n^{2}}\left(  \frac{3}{250n^{3}}\right)  ^{\frac{\bar
{\alpha}}{1-\bar{\alpha}}},\\
\left(  \frac{1}{2}\right)  ^{1+6/\alpha_{0}}\frac{\lambda}{K_{2}}\frac
{1}{K_{1}^{3/\alpha_{0}}}\left(  \frac{3}{250n^{3}}\right)  ^{\frac{\left(
2+\bar{\alpha}\right)  (1+3/\alpha_{0})}{1-\bar{\alpha}}}%
\end{array}
\right\}  .\label{Vep}%
\end{equation}

\end{proof}

We now make a proposition similar to the statement of Theorem 1.2, but with
the operator close to the Laplacian. Throughout this proof the constants
$C_{0}$ and $r_{0}$ will refer to the constants obtained in (\ref{C}) and
(\ref{R}) respectively.

\begin{proposition}
\label{prop}Given $\bar{\alpha},\lambda,\Lambda,$ if the $\lambda,\Lambda$
elliptic operator $F$ satisfies
\begin{equation}
|F(N)-tr(N)|<\tilde{\varepsilon}_{0}\left\Vert N\right\Vert
\end{equation}
for all $N\in S_{n}$, then any viscosity solution $u\in C(B_{1})$ of (\ref{f})
will be in $C^{2,\bar{\alpha}}(B_{1/4})$ and satisfy the following estimate
\[
||u||_{C^{2,\bar{\alpha}}(B_{1/4})}\leq\tilde{C}_{1}||u||_{L^{\infty}(B_{1})}%
\]
for
\begin{equation}
\tilde{C}_{1}=C_{0}(1+\frac{3}{1-r_{0}^{\bar{\alpha}}})\frac{2^{\bar{\alpha}}%
}{r_{0}^{1+\bar{\alpha}}}\left(  2+2^{2+\bar{\alpha}}\right)  ^{2} \label{eee}%
\end{equation}
where $C_{0},r_{0}$ and $\tilde{\varepsilon}_{0}$ are as stated in Lemma
\ref{lemma1}.
\end{proposition}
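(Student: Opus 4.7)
The plan is to apply Lemma \ref{lemma1} dyadically and iteratively to build a Cauchy sequence of approximating quadratic polynomials whose limit yields the second-order Taylor expansion of $u$ at the origin. Doing this at every $x_0 \in B_{1/4}$, after composing with a translation-and-rescaling that keeps the domain inside $B_1$, will produce the claimed uniform $C^{2,\bar{\alpha}}$ estimate on $B_{1/4}$.

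Concretely, set $M_0 = \|u\|_{L^{\infty}(B_1)}$ and $P_0 \equiv 0$. Given a quadratic $P_k$ satisfying $F(D^2 P_k) = 0$ and $\sup_{B_{r_0^k}} |u - P_k| \leq r_0^{k(2+\bar{\alpha})} M_0$, I would rescale
\begin{equation*}
v_k(y) := \frac{u(r_0^k y) - P_k(r_0^k y)}{r_0^{k(2+\bar{\alpha})}}, \qquad y \in B_1,
\end{equation*}
so that $\|v_k\|_{L^{\infty}(B_1)} \leq M_0$ and $F_k(D^2 v_k) = 0$ for $F_k(N) := r_0^{-k\bar{\alpha}} F(r_0^{k\bar{\alpha}} N + D^2 P_k)$. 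After checking that $F_k$ inherits $\lambda, \Lambda$-ellipticity and the closeness-to-trace hypothesis of Lemma \ref{lemma1}, I would apply Lemma \ref{lemma1} to $v_k$ and extract a quadratic $\tilde{P}$ with $F_k(D^2 \tilde{P}) = 0$, $\sup_{B_{r_0}} |v_k - \tilde{P}| \leq r_0^{2+\bar{\alpha}} M_0$, and $\|\tilde{P}\|_{L^{\infty}(B_1)} \leq C_0 M_0$. Setting $P_{k+1}(x) := P_k(x) + r_0^{k(2+\bar{\alpha})} \tilde{P}(x/r_0^k)$ advances the induction; a quick check using the definition of $F_k$ confirms $F(D^2 P_{k+1}) = 0$ and the approximation property on $B_{r_0^{k+1}}$.

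Standard coefficient estimates for a quadratic polynomial $Q$ bounded by $\|Q\|_{L^{\infty}(B_1)}$ convert the step inequality $\|\tilde{P}\|_{L^{\infty}(B_1)} \leq C_0 M_0$ into $\|D^2(P_{k+1} - P_k)\| \leq C r_0^{k\bar{\alpha}} M_0$, with analogous bounds on $DP_k(0)$ and $P_k(0)$. Summing the geometric series $\sum_k r_0^{k\bar{\alpha}}$ produces the factor $1/(1 - r_0^{\bar{\alpha}})$ visible in (\ref{eee}); the limiting polynomial $P_\infty$ is the second-order Taylor expansion of $u$ at $0$, and the dyadic approximation yields $|u(x) - P_\infty(x)| \leq C |x|^{2+\bar{\alpha}}$ with an explicit $C$. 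Repeating the argument at any $x_0 \in B_{1/4}$, after precomposing $u$ with $y \mapsto x_0 + \tfrac{3}{4} y$ (which maps $B_1$ into $B_1$, preserves $\lambda,\Lambda$-ellipticity, and preserves the closeness-to-trace hypothesis with the same constant), gives the uniform pointwise estimate on $B_{1/4}$. The explicit form of $\tilde{C}_1$ in (\ref{eee}) is then recovered by bookkeeping the scaling factor $2^{\bar{\alpha}}/r_0^{1+\bar{\alpha}}$ together with the combinatorial factor $(2 + 2^{2+\bar{\alpha}})^2$ that arises when pointwise $C^{2,\bar{\alpha}}$ information is converted into the seminorm.

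The main delicate point is the verification that $F_k$ continues to satisfy the closeness-to-trace hypothesis of Lemma \ref{lemma1} uniformly in $k$. A direct computation using $F(D^2 P_k) = 0$ and (\ref{CM0}) gives
\begin{equation*}
|F_k(N) - \mathrm{tr}(N)| \leq \tilde{\varepsilon}_0 \|N\| + 2\tilde{\varepsilon}_0\, r_0^{-k\bar{\alpha}} \|D^2 P_k\|,
\end{equation*}
and the extra term, while controlled by the Cauchy estimate $\|D^2 P_k\| \lesssim M_0$, does not a priori fit inside $\tilde{\varepsilon}_0 \|N\|$. Closing the iteration therefore requires either selecting the quadratic correction in Lemma \ref{lemma1} so that $\mathrm{tr}(D^2 P_k) = 0$ at each stage, or observing that the closeness-to-trace hypothesis is used in the proof of Lemma \ref{lemma1} only at inputs whose scale absorbs the extra error. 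Tracking this carefully while preserving the explicit form of $\tilde{\varepsilon}_0$ in (\ref{Vep}) is the principal technical hurdle in the proof.
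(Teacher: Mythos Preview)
Your overall architecture---iterate Lemma \ref{lemma1} at dyadic scales to produce a Cauchy sequence of quadratic polynomials, translate to each base point, then convert pointwise $C^{2,\bar\alpha}$ information to the seminorm via Lemma \ref{cc8}---is exactly the paper's strategy, and you have correctly identified where the constants $1/(1-r_0^{\bar\alpha})$, $r_0^{-(1+\bar\alpha)}$, $2^{\bar\alpha}$, and $(2+2^{2+\bar\alpha})^2$ in (\ref{eee}) come from. There is, however, a genuine gap at precisely the place you flag.

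Your two suggested fixes do not close the iteration. Route (a), forcing $\mathrm{tr}(D^2P_k)=0$, is incompatible with the requirement $F(D^2P_k)=0$ that the induction needs; route (b) is too vague to preserve the explicit constants in (\ref{Vep}). The paper resolves the issue differently, and in doing so implicitly uses more than the stated hypothesis (\ref{CM0}). First, it normalizes by $r_0^{2i}$ rather than $r_0^{i(2+\bar\alpha)}$, setting $v_i(x)=\bigl(u(r_0^i x)-P_i(r_0^i x)\bigr)/r_0^{2i}$ and $F_i(N)=F(N+D^2P_i)$; this makes $F_i$ a pure translate of $F$ and eliminates the $r_0^{-k\bar\alpha}$ blow-up in your error term. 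Second, it invokes the almost-linear condition (\ref{Cm1}), which is translation invariant and therefore inherited by $F_i$ immediately: $\|DF_i(M)-DF_i(N)\|=\|DF(M+D^2P_i)-DF(N+D^2P_i)\|\le\tilde\varepsilon_0$. Combined with $F_i(0)=0$ and (\ref{Cm22}) this gives $|F_i(N)-DF_i(0)\cdot N|\le\tilde\varepsilon_0\|N\|$, and since $\|DF_i(0)-I\|\le\|DF(D^2P_i)-DF(0)\|+\|DF(0)-I\|\le 2\tilde\varepsilon_0$ (the second piece following from (\ref{CM0}) by letting $N\to 0$), one obtains (\ref{CM0}) for $F_i$ with constant at most $3\tilde\varepsilon_0$, \emph{uniformly in $i$}. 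Starting the argument with $\tilde\varepsilon_0$ replaced by $\tilde\varepsilon_0/3$ then closes the loop with the explicit constants intact. Strictly speaking this means the proposition needs (\ref{Cm1}) in addition to (\ref{CM0}); the paper uses it without saying so, which is harmless because the proposition is only applied inside the proof of Theorem \ref{Theorem 1.2}, where (\ref{Cm1}) holds by hypothesis.

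One minor point: to recover the estimate on $B_{1/4}$ via Lemma \ref{cc8} you need the pointwise expansion at every $x_0\in B_{1/2}$, so the rescaling should be $y\mapsto x_0+\tfrac12 y$ (mapping $B_1$ into $B_1$) rather than $y\mapsto x_0+\tfrac34 y$ with $x_0\in B_{1/4}$; this is also the source of the factor $2^{\bar\alpha}$.
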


\begin{proof}
We first prove that the $C^{2,\bar{\alpha}}$ estimate holds at the origin. As
before, we denote $||u||_{L^{\infty}(B_{1})}=M$. \newline We prove that there
exists a polynomial $P$ of degree 2 such that
\begin{align}
|u(x)-P(x)|  &  \leq MC_{0}^{\prime}|x|^{2+\bar{\alpha}}\hspace{0.2cm}\forall
x\in B_{1}\label{e1}\\
F(D^{2}P)  &  =0\nonumber\\
||P||_{L^{\infty}(B_{1})}  &  \leq MC_{0}^{\prime}\nonumber
\end{align}
where $C_{0}^{\prime}=C_{0}(1+\frac{3}{1-r_{0}^{\bar{\alpha}}})\frac{1}%
{r_{0}^{1+\bar{\alpha}}}$. In order to prove the existence of such a
polynomial $P$, we need the following claim.

\begin{claim}
\label{claim_CL2} There exists a sequence of polynomials $\{P_{k}%
\}_{k=1}^{\infty}$ of degree 2 such that
\begin{align}
F(D^{2}P_{k})  &  =0\label{e0}\\
||u-P_{k}||_{L^{\infty}(B_{r_{0}^{k}})}  &  \leq Mr_{0}^{k(2+\bar{\alpha})}
\label{e2}%
\end{align}
where $F$ and $u$ are as defined in Proposition \ref{prop}.
\end{claim}

We first prove the claim.

\begin{proof}
: Let $P_{0} =0$. Then (\ref{e2}) holds good for the $k =0$ case. We assume
that (\ref{e2}) holds for $k \leq i$ and we prove it for $k =i +1.$

Consider
\[
v_{i}(x)=\frac{u(r_{0}^{i}x)-P_{i}(r_{0}^{i}x)}{r_{0}^{2i}}%
\]
for all $x\in B_{1}$. Define
\[
F_{i}(N)=F(N+D^{2}P_{i})
\]
for all $N\in S_{n}$. Since $F(D^{2}P_{i})=0$ we see that $F_{i}(D^{2}%
v_{i})=0$. Since
\[
||u-P_{i}||_{L^{\infty}(B_{r_{0}^{i}})}\leq Mr_{0}^{i(2+\bar{\alpha})},
\]
we observe that
\[
||v_{i}||_{L^{\infty}(B_{1})}\leq\frac{Mr_{0}^{i(2+\bar{\alpha})}}{r_{0}^{2i}%
}=Mr_{0}^{i\bar{\alpha}}.
\]
Note that the operator $F_{i}$ satisfies the same properties as the operator
$F$:
\[
|DF_{i}(M)-DF_{i}(N)|=|DF(M+D^{2}P_{i})-DF(N+D^{2}P_{i})|\leq\tilde
{\varepsilon}_{0}%
\]
and $F_{i}$ also has the same ellipticity constants as $F$. We apply Lemma
\ref{lemma1} to the equation $F_{i}(D^{2}v_{i})=0$. This gives us the
existence of a quadratic polynomial
\begin{equation}
\bar{P_{i}}=a_{i}+\vec{b}_{i}\cdot x+x^{T}\mathbf{c}_{i}\cdot x \label{pbar}%
\end{equation}
such that
\begin{align}
F_{i}(D^{2}\bar{P_{i}})  &  =0\label{est}\\
||v_{i}-\bar{P_{i}}||_{L^{\infty}(B_{r_{0}})}  &  \leq Mr_{0}^{i\bar{\alpha}%
}r_{0}^{(2+\bar{\alpha})}\label{est2}\\
|\bar{|P}_{i}||_{L^{\infty}(B_{1})}  &  \leq C_{0}Mr_{0}^{i\bar{\alpha}}.
\label{est3}%
\end{align}
We conclude immediately from (\ref{est3}) that
\begin{align}
\left\vert a_{i}\right\vert  &  \leq C_{0}Mr_{0}^{i\bar{\alpha}}\label{est4}\\
\left\Vert b_{i}\right\Vert  &  \leq C_{0}Mr_{0}^{i\bar{\alpha}}\nonumber\\
\left\Vert c_{i}\right\Vert  &  \leq C_{0}Mr_{0}^{i\bar{\alpha}}.\nonumber
\end{align}

Next, we define
\begin{equation}
P_{i +1} =P_{i} +r_{0} ^{2i}\bar{P}_{i}(r_{0} ^{ -i}x). \label{e5}%
\end{equation}

From (\ref{est}) we see that
\[
F(D^{2}P_{i+1})=F_{i}(D^{2}\bar{P}_{i})=0
\]
and on substituting the expression for $v_{i}$ into (\ref{est2}) we see that
\[
||\frac{u(r_{0}^{i}x)-P_{i}(r_{0}^{i}x)}{r_{0}^{2i}}-\bar{P}_{i}||_{L^{\infty
}(B_{r_{0}})}\leq Mr_{0}^{i\bar{\alpha}}r_{0}^{(2+\bar{\alpha})}%
\]
which reduces to
\[
||u-P_{i+1}||_{L^{\infty}(B_{r_{0}^{i+1}})}\leq Mr_{0}^{(i+1)(2+\bar{\alpha}%
)}.
\]
This completes the inductive construction of the quadratic polynomial
sequence. Hence the claim \ref{claim_CL2}.
\end{proof}

Using the above claim, we return to proving Proposition \ref{prop}.\newline We
show that this sequence $\{P_{k}\}_{k=1}^{\infty}$ is convergent and
$\lim_{k\rightarrow\infty}P_{k}=P$ is the required polynomial in (\ref{e1}).

From (\ref{e5}), (\ref{pbar}) we see that
\begin{equation}
P_{i+1}-P_{i}=r_{0}^{2i}a_{i}+r_{0}^{i}\vec{b}_{i}\cdot x+x^{T}\mathbf{c}%
_{i}\cdot x. \label{Pmarker}%
\end{equation}
Inequality (\ref{est3}) guarantees that the series $\sum_{i=1}^{\infty
}(P_{i+1}-P_{i})$ is bounded by a convergent geometric series
\[
|P_{i+1}-P_{i}|\leq MC_{0}r_{0}^{i\bar{\alpha}}.
\]
Hence the telescopic series $\sum_{i=1}^{\infty}(P_{i+1}-P_{i})$ converges
uniformly on the unit ball and we define
\[
P=\lim_{i\rightarrow\infty}P_{i}=\sum_{i=1}^{\infty}(P_{i+1}-P_{i}).
\]
Note that $F(D^{2}P)=0$ as $F(D^{2}P_{i})=0$ for all $i$. The limit $P$ will
be a quadratic polynomial as well.

For $x\in B_{r_{0}^{i}}$ we have, using (\ref{Pmarker}), (\ref{est4}) \
\begin{align*}
|P(x)-P_{i}(x)|  &  \leq\\
\sum_{j=i}^{\infty}|P_{j+1}-P_{j}|  &  \leq C_{0}M\sum_{j=i}^{\infty}%
(r_{0}^{2j}r_{0}^{j\bar{\alpha}}+r_{0}^{j}r_{0}^{j\bar{\alpha}}r_{0}^{i}%
+r_{0}^{i}r_{0}^{j\bar{\alpha}}r_{0}^{i})\\
&  =C_{0}M\left(  \frac{r_{0}^{\left(  2+\bar{\alpha}\right)  i}}%
{1-r_{0}^{2+\bar{\alpha} }}+\frac{r_{0}^{\left(  1+\bar{\alpha}\right)  i}%
}{1-r_{0}^{1+\bar{\alpha}}}r_{0}^{i}+\frac{r_{0}^{i\bar{\alpha}}}%
{1-r_{0}^{\bar{\alpha}}}r_{0}^{2i}\right) \\
&  \leq3C_{0}M\frac{1}{1-r_{0}^{\bar{\alpha}}}r_{0}^{\left(  2+\bar{\alpha
}\right)  i}.
\end{align*}

If we fix $x\in B_{1}$, we can choose an integer $i$ such that
\[
r_{0}^{i+1}<\left\Vert x\right\Vert \leq r_{0}^{i}.
\]
Then we have the estimate
\begin{align}
|u(x)-P(x)|  &  \leq|u(x)-P_{i}(x)|+|P_{i}(x)-P(x)|\nonumber\\
&  \leq MC_{0}r_{0}^{i(2+\bar{\alpha})}+\frac{3MC_{0}}{1-r_{0}^{\bar{\alpha}}%
}r_{0}^{i(2+\bar{\alpha})}\nonumber\\
&  \leq MC_{0}^{\prime}\left\Vert x\right\Vert ^{2+\bar{\alpha}}
\label{estimate4}%
\end{align}
where
\begin{equation}
C_{0}^{\prime}=C_{0}(1+\frac{3}{1-r_{0}^{\bar{\alpha}}})\frac{1}{r_{0}%
^{1+\bar{\alpha}}}. \label{cnotprime}%
\end{equation}
This completes the proof of (\ref{e1}).\newline Next, consider any point
$x_{0}$ in $B_{1/2}$. Let $v(x^{\prime})=4u(x^{\prime}/2+x_{0})$ where $x\in
B_{1}$. Note that $B_{1/2}(x_{0})\subset B_{1}$ and hence $F(D^{2}v)=0$ makes
sense on $B_{1}$. Applying estimate (\ref{estimate4}) to $v$ for $x^{\prime
}=2(x-x_{0})~$yields a polynomial $P_{x_{0}}(x)$ such that
\[
|u(x)-P_{x_{0}}(x)|\leq MC_{0}^{\prime}2^{\bar{\alpha}}\left\Vert
x-x_{0}\right\Vert ^{2+\bar{\alpha}}%
\]
holds on $B_{1/2}(x_{0}).$

The following Lemma has been used in passing in the literature~ \cite[Remark
3, page 74]{CC}. We state it here for precision in the estimate. For the proof
see Corollary \ref{CC81} in Appendix 1.

\begin{lemma}
\label{cc8}Suppose for all $x_{0}\in B_{1/2}$ there a second order polynomial
$P_{x_{0}}$ such that%
\[
|u(x)-P_{x_{0}}(x)|\leq K\left\Vert x-x_{0}\right\Vert ^{2+\bar{\alpha}}%
\]
and
\[
\left\vert P_{x_{0}}\right\vert \leq K
\]

on $B_{1}.$ Then $\left\Vert D^{2}u\right\Vert _{C^{\bar{\alpha}}\left(
B_{1/4}\right)  }\leq\left(  2+2^{2+\bar{\alpha}}\right)  ^{2}K.$
\end{lemma}

It follows from Lemma \ref{cc8} that $u\in C^{2,\bar{\alpha}}(\bar{B}%
_{1/4}(0))$ with bounds given by
\begin{equation}
||D^{2}u||_{C^{\bar{\alpha}}(\bar{B}_{1/4}(0))}\leq C_{0}^{\prime}%
2^{\bar{\alpha} }\left(  2+2^{2+\bar{\alpha}}\right)  ^{2}||u||_{L^{\infty
}(B_{1})}. \label{C1}%
\end{equation}
Combining (\ref{cnotprime}) with (\ref{C1}) proves the estimate (\ref{eee}).
\end{proof}

\begin{proof}
[Proof of Theorem \ref{Theorem 1.2}]: \newline

We are assuming that $F$ is an operator on the space of symmetric matrices,
and thus we can take a $DF$ that is symmetric. Let
\[
W=DF(\mathbf{0})
\]
which will be a positive symmetric matrix, by ellipticity. In particular
\[
\lambda I\leq W\leq\Lambda I.
\]
We can find a positive square root of the inverse, namely
\begin{equation}
AA^{T}=W^{-1}.\label{ataw}%
\end{equation}
Now define%
\[
\tilde{F}(N)=F(ANA^{T}).
\]
Observe
\begin{align*}
\frac{\partial\tilde{F}}{\partial n_{ij}}|_{N=\mathbf{0}\ } &  =\frac{\partial
F}{\partial a_{pq}}|_{\mathbf{0}}\frac{\partial(ANA^{T})_{pq}}{\partial
n_{ij}}\\
&  =\sum_{p,q}\frac{\partial F}{\partial a_{pq}}|_{\mathbf{0}}A_{pi}A_{jq}%
^{T}\\
&  =W_{pq}A_{pi}A_{jq}^{T}=\left(  A^{T}WA\right)  _{ij}.
\end{align*}
But by (\ref{ataw}),
\[
A^{T}WA=I.
\]
It follows that $D\tilde{F}(\mathbf{0})=I$. Note that $\tilde{F}$ has
ellipticity constants in $[\frac{\lambda}{\Lambda},\frac{\Lambda}{\lambda}]$.

Finally, note that if $F$ satisfies a $\varepsilon_{0}$ closeness condition
then
\begin{align*}
\left\Vert D\tilde{F}(M)-D\tilde{F}(N)\right\Vert  &  =\left\Vert
A^{T}DF(AMA^{T})A-A^{T}DF(ANA^{t})A\right\Vert \\
&  =\left\Vert A^{T}\left(  DF(AMA^{T})-DF(ANA^{T})\right)  A\right\Vert \\
&  \leq\varepsilon_{0}\left\Vert A^{T}A\right\Vert \leq\varepsilon_{0}\Lambda.
\end{align*}
Therefore, $\tilde{F}$ is almost linear with constant $\varepsilon_{0}%
\Lambda.$

Now we let
\begin{equation}
\varepsilon_{0}\left(  n,\lambda,\Lambda,\bar{\alpha}\right)  :=\frac
{1}{\Lambda}\tilde{\varepsilon}_{0}(n,\frac{\lambda}{\Lambda},\frac{\Lambda
}{\lambda},\bar{\alpha})\label{eps_0}%
\end{equation}
for $\tilde{\varepsilon_{0}}$ defined in Lemma \ref{lemma1}. It follows that
$\tilde{F}$ satisfies the $\tilde{\varepsilon}_{0}$ criterion of \ref{lemma1}
when $F$ satisfies the $\varepsilon_{0}$ closeness condition. Now let%
\[
v(x)=u(\left(  A^{-1}\right)  ^{T}x).
\]
Notice that
\begin{align*}
v_{ij}(x) &  =u_{kl}A_{jk}^{-1}A_{il}^{-1}\\
D^{2}v &  =A^{-1}D^{2}u\left(  \left(  A^{-1}\right)  ^{T}x\right)  \left(
A^{-1}\right)  ^{T}%
\end{align*}
so
\begin{align*}
\tilde{F}(N) &  =F(AA^{-1}D^{2}u\left(  \left(  A^{-1}\right)  ^{T}x\right)
\left(  A^{-1}\right)  ^{T}A^{T})\\
&  =F\left(  D^{2}u\left(  \left(  A^{-1}\right)  ^{T}x\right)  \right)  =0.
\end{align*}
Now if $u$ is defined on $B_{1}$, the new function $v$ will be defined on
$B_{\frac{1}{\sqrt{\Lambda}}}.$ Rescaling
\[
\tilde{v}=\Lambda v\left(  \frac{x}{\sqrt{\Lambda}}\right)
\]
we have a function defined on $B_{1}$ and can apply Proposition \ref{prop} to
$\tilde{v}:$%
\[
||D^{2}\tilde{v}||_{C^{\bar{\alpha}}(B_{1/4})}\leq\tilde{C}_{1}||\tilde
{v}||_{L^{\infty}(B_{1})}\leq\Lambda\tilde{C}_{1}||u||_{L^{\infty}(B_{1})}.
\]
Meanwhile, provided that%
\begin{align*}
\sqrt{\Lambda}A^{T}x &  \in B_{1/4}\\
\sqrt{\Lambda}A^{T}y &  \in B_{1/4},
\end{align*}
we have%
\begin{align*}
\frac{\left\Vert D^{2}u(x)-D^{2}u(y)\right\Vert }{\left\vert x-y\right\vert
^{\bar{\alpha}}} &  =\frac{\left\Vert A\left(  D^{2}v(A^{T}x)-D^{2}%
v(A^{T}y)\right)  A^{T}\right\Vert }{\left\vert x-y\right\vert ^{\bar{\alpha}%
}}\\
&  =\frac{\left\Vert A\left(  D^{2}\tilde{v}(\sqrt{\Lambda}A^{T}x)-D^{2}%
\tilde{v}(\sqrt{\Lambda}A^{T}y)\right)  A^{T}\right\Vert }{\left\vert
x-y\right\vert ^{\bar{\alpha}}}\\
&  \leq\frac{\Lambda}{\left\vert x-y\right\vert ^{\bar{\alpha}}}\left\Vert
D^{2}\tilde{v}\right\Vert _{C^{\bar{\alpha}}(B_{1/4})}\left\vert \sqrt
{\Lambda}A^{T}x-\sqrt{\Lambda}A^{T}y\right\vert ^{\bar{\alpha}}\\
&  \leq\Lambda^{1+\bar{\alpha}}\left\Vert D^{2}\tilde{v}\right\Vert
_{C^{\bar{\bar{\alpha}}}(B_{1/4})}\\
&  \leq\Lambda^{2+\bar{\alpha}}\tilde{C}_{1}||u||_{L^{\infty}(B_{1})}.
\end{align*}
We conclude that for $x\in B_{1/4\Lambda}$ the estimate holds.
\end{proof}

\section{Proof of Theorem \ref{Theorem 1.3}}

To begin proving Theorem \ref{Theorem 1.3} we require the following version of
\cite[Lemma 7.9]{CC}:

\begin{lemma}
\label{lemma 2.4} Let $u$ be a viscosity solution of (\ref{F}) in $B_{4/7}$
such that $||u||_{L^{\infty}(B_{4/7})}\leq1$ and $f\in L^{n}(B_{4/7})$. Assume
that $F(D^{2}w)=0$ has $C^{1,1}$ interior estimates (with constant $C_{1}$).
Then there exists a function $h\in C^{2}(\bar{B}_{3/7})$ such that $h$
satisfies $||h||_{C^{1,1}(\bar{B}_{3/7})}\leq c(n)C_{1}$ (for a constant
\textsubscript {}$c(n)$ depending only on $n$) and
\begin{align}
||u-h||_{L^{\infty}(B_{3/7})}  &  \leq C_{3}||f||_{L^{n}(B_{4/7})}%
\label{L7.9}\\
F(D^{2}h)  &  =0\hspace{0.2cm}in\hspace{0.1cm}B_{1/2}\nonumber\\
h  &  =u\hspace{0.4cm}on\hspace{0.1cm}\partial{B_{1/2}}.\nonumber
\end{align}
Here $C_{3}$ is a positive constant depending on $n,\lambda,\Lambda,C_{1}$.
\end{lemma}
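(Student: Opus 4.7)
The plan is to construct $h$ as the unique continuous viscosity solution of the Dirichlet problem $F(D^{2}h)=0$ in $B_{1/2}$ with $h=u$ on $\partial B_{1/2}$, and then to extract the three claimed bounds one at a time. Existence of such an $h$ follows from Perron's method together with the comparison principle for viscosity solutions of uniformly elliptic equations with $F(\mathbf{0})=0$ (see e.g.\ \cite[Chapter 4]{CC}); the continuity of the boundary datum $u|_{\partial B_{1/2}}$ is inherited from $u\in C(B_{4/7})$, and comparing $h$ against the constants $\pm\|u\|_{L^{\infty}(\partial B_{1/2})}$ (which are themselves solutions since $F(\mathbf{0})=0$) gives $\|h\|_{L^{\infty}(B_{1/2})}\le 1$.

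Next, applying the hypothesized interior $C^{1,1}$ estimate for the equation $F(D^{2}w)=0$ and rescaling from $B_{1/2}$ down to $B_{3/7}$ yields
\begin{equation*}
\|h\|_{C^{1,1}(\bar{B}_{3/7})}\le c(n)\,C_{1}\,\|h\|_{L^{\infty}(B_{1/2})}\le c(n)\,C_{1},
\end{equation*}
with $c(n)$ a purely dimensional factor absorbing the gap between the radii $3/7$ and $1/2$. For the $L^{\infty}$ comparison, set $w=u-h$; by uniform ellipticity of $F$, combined with $F(D^{2}u)=f$ and $F(D^{2}h)=0$, the difference satisfies the Pucci extremal inequalities
\begin{equation*}
\mathcal{P}^{-}_{\lambda,\Lambda}(D^{2}w)\le f,\qquad \mathcal{P}^{+}_{\lambda,\Lambda}(D^{2}w)\ge f
\end{equation*}
in the viscosity sense on $B_{1/2}$, with $w=0$ on $\partial B_{1/2}$. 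Applying the Alexandrov--Bakelman--Pucci estimate (cf.\ \cite[Theorem 3.6]{CC}) to $w$ and to $-w$ and taking the maximum then produces
\begin{equation*}
\sup_{B_{1/2}}|u-h|\le C(n,\lambda,\Lambda)\,\|f\|_{L^{n}(B_{1/2})}\le C_{3}\,\|f\|_{L^{n}(B_{4/7})},
\end{equation*}
which gives \eqref{L7.9} after restricting to $B_{3/7}\subset B_{1/2}$.

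The main technical point, rather than a genuine conceptual obstacle, is justifying that $w=u-h$ inherits the Pucci inequalities at the viscosity level: this is the standard subtraction lemma for viscosity solutions of uniformly elliptic equations, which relies only on test-function manipulations and does not require pointwise regularity of $u$ or $h$. Once this step is in hand the argument is routine: ABP supplies the $L^{n}$-dependence of $\|u-h\|_{L^{\infty}}$ on $f$, while the hypothesized interior $C^{1,1}$ estimates supply the quantitative control on $h$ itself, and together they yield the three stated conclusions.
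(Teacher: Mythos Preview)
Your proof is correct and is essentially the argument behind \cite[Lemma~7.9]{CC}, which is precisely what the paper invokes (the paper's entire proof is to cite that lemma and note that the $x$-dependence there drops out). One small correction: the subtraction step \emph{does} use the regularity of $h$ --- with $h\in C^{2}$ (this is part of the hypothesis; see the Note following the lemma, which also makes your appeal to Perron's method unnecessary), $h+\psi$ is an admissible $C^{2}$ test function for $u$ whenever $\psi$ touches $u-h$, and the Pucci inequalities for $w=u-h$ follow immediately; for two bare viscosity solutions one would need the sup/inf-convolution machinery rather than the direct test-function manipulation you describe.
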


\textbf{Note:} We say that $F(D^{2}w) =0$ has $C^{1 ,1}$ interior estimates
(with constant $C_{1}$) if for any $w_{0}\in C(\partial B)$ there exists a
solution $w\in C^{2}(B_{1})\cap C(\bar{B}_{1})$ of
\begin{align*}
F(D^{2}w)=0 \hspace{1cm}in\hspace{.2cm}B_{1}\\
w=w_{0}\hspace{.5cm}on\hspace{.2cm}\partial B_{1}%
\end{align*}
such that $\vert\vert w\vert\vert_{C^{1 ,1}(\bar{B}_{1/2})} \leq C_{1}%
||w_{0}||_{L^{\infty}(\partial B_{1})} $.

\begin{proof}
The statement in \cite[lemma 7.9]{CC} is given for elliptic operators
$F(D^{2}w,x)$ that may depend also on $x.$ The obvious approximation argument
when there is no dependence on $x$ gives the proof of Lemma \ref{lemma 2.4}.
\end{proof}

\begin{lemma}
\label{lemma 2.6}There exists $\delta>0$ depending on $n,\lambda,\Lambda,$ and
$\alpha<\bar{\alpha}$ such that if $u$ is a viscosity solution of (\ref{F}) in
$B_{1}$ with $F$ almost linear with constant $\varepsilon_{0}(n,\lambda
,\Lambda,\bar{\alpha})$ with
\[
||u||_{L^{\infty}(B_{1})}\leq1
\]
and
\begin{equation}
\left(  \frac{1}{|B_{r}|}\int_{B_{r}}|f|^{n}\right)  ^{1/n}\leq\delta
r^{\alpha}\hspace{0.2cm}\forall r\leq1 \label{DD}%
\end{equation}
then there exists a polynomial $P$ of degree 2 such that
\begin{align}
||u-P||_{L^{\infty}(B_{r})}  &  \leq C_{4}r^{2+\alpha}\hspace{0.2cm}\forall
r\leq1,\nonumber\\
|DP(0)|+||D^{2}P||  &  \leq C_{4} \label{P1}%
\end{align}
for some constant $C_{4}>0$ depending only on $n,\lambda,\Lambda,\alpha$.
\end{lemma}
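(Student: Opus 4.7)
The plan is to build $P$ as the limit of a sequence of quadratic polynomials $\{P_k\}_{k\ge 0}$ constructed by an iterative polynomial approximation on a geometric sequence of shrinking balls. Specifically, for a small $r \in (0, 1)$ to be fixed, I will inductively produce $P_k$ with $F(D^2 P_k)=0$ and $\|u-P_k\|_{L^\infty(B_{r^k})} \leq r^{k(2+\alpha)}$; the starting polynomial is $P_0 \equiv 0$, which works because $\|u\|_{L^\infty(B_1)}\leq 1$. The two essential tools at each step are Lemma \ref{lemma 2.4}, which at each scale replaces $u$ by an $F$-harmonic function $h$ up to an error controlled by $\delta$, and Theorem \ref{Theorem 1.2} (via Proposition \ref{prop}), which provides a $C^{2,\bar\alpha}$ estimate on $h$ and hence an approximation by a quadratic polynomial with error of order $r^{2+\bar\alpha}$. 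The slack $\bar\alpha - \alpha > 0$ is precisely what drives the iteration forward.

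For the inductive step, given $P_k$ I rescale
\[
v(x) = \frac{u(r^k x) - P_k(r^k x)}{r^{k(2+\alpha)}}, \qquad x \in B_1,
\]
and introduce the shifted, rescaled operator $\tilde F(N) := r^{-k\alpha}\,F\!\left(r^{k\alpha}N + D^2P_k\right)$ together with $\tilde f(x) := r^{-k\alpha}f(r^k x)$. Then $\tilde F(D^2 v) = \tilde f$ in $B_1$; by the inductive hypothesis $\|v\|_{L^\infty(B_1)}\leq 1$; and $\tilde F$ inherits from $F$ the properties $\tilde F(\mathbf{0}) = 0$, $\lambda,\Lambda$-ellipticity, and almost linearity with the same constant $\varepsilon_0(n,\lambda,\Lambda,\bar\alpha)$. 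A change of variables shows that the hypothesis (\ref{DD}) is scale invariant, so $\tilde f$ satisfies the same Morrey bound with the same $\delta$. I apply Lemma \ref{lemma 2.4} to $v$ (whose $C^{1,1}$ interior estimate hypothesis is supplied by Theorem \ref{Theorem 1.2} applied to $\tilde F$) to extract $h$ with $\tilde F(D^2 h) = 0$ in $B_{1/2}$ and $\|v-h\|_{L^\infty(B_{3/7})} \leq C\delta$. Applying Theorem \ref{Theorem 1.2} to $h$ and taking $Q$ to be its second-order Taylor polynomial at the origin gives $\tilde F(D^2 Q) = \tilde F(D^2 h(0)) = 0$ and $\|h-Q\|_{L^\infty(B_r)} \leq C_\star r^{2+\bar\alpha}$. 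Choosing $r$ small enough that $C_\star r^{\bar\alpha-\alpha} \leq 1/2$ and then $\delta$ small enough that $C\delta \leq r^{2+\alpha}/2$ produces $\|v-Q\|_{L^\infty(B_r)} \leq r^{2+\alpha}$. Unwinding the rescaling with $P_{k+1}(y) := P_k(y) + r^{k(2+\alpha)}Q(y/r^k)$ then yields $F(D^2 P_{k+1}) = 0$ and $\|u - P_{k+1}\|_{L^\infty(B_{r^{k+1}})} \leq r^{(k+1)(2+\alpha)}$, completing the induction.

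The coefficients of $P_{k+1}-P_k$ decay geometrically in $k$ (the second order part is $r^{k\alpha}D^2 Q$, the linear part is $r^{k(1+\alpha)} DQ(0)$, and the constant part is $r^{k(2+\alpha)} Q(0)$), so the telescoping series $\sum (P_{k+1}-P_k)$ converges uniformly on $B_1$ to a quadratic polynomial $P$ with $F(D^2 P) = 0$, and summing the geometric series produces the bound $|DP(0)| + \|D^2 P\| \leq C_4$ claimed in (\ref{P1}). For any $r \in (0,1)$, pick $k$ with $r^{k+1} < r \leq r^k$ and apply the triangle inequality $\|u - P\|_{L^\infty(B_r)} \leq \|u - P_k\|_{L^\infty(B_{r^k})} + \|P_k - P\|_{L^\infty(B_{r^k})}$, controlling both terms by geometric sums, to conclude the desired estimate $\|u-P\|_{L^\infty(B_r)} \leq C_4 r^{2+\alpha}$. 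The principal technical hurdle I anticipate is the bookkeeping in the rescaling step: one must verify that $\tilde F$ inherits exactly the same ellipticity constants and the same almost-linearity constant $\varepsilon_0$ (independent of $k$), so that a single choice of $\varepsilon_0$ at the outset suffices at every level of the iteration, and simultaneously confirm that the Morrey hypothesis (\ref{DD}) is preserved under the dilation $x \mapsto r^k x$ without worsening $\delta$.
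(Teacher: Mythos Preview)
Your proposal is correct and follows essentially the same approach as the paper: the paper also constructs a sequence $\{P_k\}$ inductively (packaged as Claim~\ref{claim_CL3}) via the same rescaling $v_i,F_i,f_i$, applies Lemma~\ref{lemma 2.4} followed by Theorem~\ref{Theorem 1.2} at each step, fixes the scale $\mu$ so that $2C_1\mu^{\bar\alpha}\le\mu^{\alpha}$ and then takes $\delta$ small, and concludes by summing the telescoping series. The only differences are cosmetic (the paper writes $\mu$ for your $r$ and records the coefficient bounds~\eqref{L1} explicitly along the way).
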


\begin{proof}
The proof follows from the following claim.

\begin{claim}
\label{claim_CL3} Given $\lambda,\Lambda,$ and $\alpha<\bar{\alpha},$ suppose
that $u$ is a viscosity solution of (\ref{F}) in $B_{1}$ for $F$ almost linear
with constant $\varepsilon_{0}(n,\lambda,\Lambda,\bar{\alpha})$, with $f$
satisfying (\ref{DD}) and $u~$satisfying
\begin{equation}
||u||_{L^{\infty}(B_{1})}\leq1. \label{L0}%
\end{equation}
Then there exists $\delta>0$, $0<\mu<1$ and a sequence%
\[
P_{k}(x)=a_{k}+b_{k}\cdot x+\frac{1}{2}x^{t}c_{k}\cdot x
\]
satisfying
\begin{align}
F(D^{2}P_{k})  &  =0\label{L1b}\\
||u-P_{k}||_{L^{\infty}(B_{\mu^{k}})}  &  \leq\mu^{k(2+\alpha)}\label{L1a}\\
|a_{k}-a_{k-1}|+\mu^{k-1}|b_{k}-b_{k-1}|+\mu^{2(k-1)}|c_{k}-c_{k-1}|  &  \leq
C_{1}\mu^{(k-1)(2+\alpha)}. \label{L1}%
\end{align}

\end{claim}

We first prove the claim.

\begin{proof}
Let $P_{0}=0$. Then for $k=0$, we see that (\ref{L1a}) holds trivially for any
$\mu>0$ from (\ref{L0})$.$ For $\mu$ determined by (\ref{L2}), we will show
that whenever (\ref{L1a}) holds for $k=i$, then there exist $P_{i+1\text{ }}$
so that (\ref{L1a}) holds for $k=i+1.$

We choose $\mu$ small enough such that
\begin{equation}
2C_{1}\mu^{\bar{\alpha}}\leq\mu^{\alpha} \label{L2}%
\end{equation}
and
\begin{equation}
\mu^{\alpha}\leq3/7. \label{L2B}%
\end{equation}
We define
\begin{align}
v_{i}(x)  &  =\frac{(u-P_{i})(\mu^{i}x)}{\mu^{i(2+\alpha)}},\label{vedf}\\
F_{i}(N)  &  =\frac{F(\mu^{i\alpha}N+c_{i})}{\mu^{i\alpha}},\nonumber\\
f_{i}(x)  &  =\frac{f(\mu^{i}x)}{\mu^{i\alpha}},\nonumber
\end{align}
where $P_{i}(x)=a_{i}+\vec{b}_{i}\cdot x+\frac{1}{2}x^{T}\cdot\mathbf{c}_{i}%
x$. Thus
\begin{equation}
F_{i}(D^{2}v_{i}(x))=f_{i}(x). \label{V}%
\end{equation}
Note that
\begin{equation}
\left\Vert v_{i}\right\Vert _{L^{\infty}(B_{1})}\leq1 \label{vleq1}%
\end{equation}
by (\ref{L1a}). Now we choose $\delta$ small enough such that
\begin{equation}
\omega_{n}^{1/n}C_{3}\delta\leq C_{1}\mu^{2+\bar{\alpha}} \label{GG}%
\end{equation}
where $\omega_{n}$ is the volume of a unit ball in $n$ dimensions and $C_{3}$
is the constant appearing in the first inequality of (\ref{L7.9}) in Lemma
\ref{lemma 2.4}.

We consider the equation (\ref{V}). Observe that (\ref{DD}) implies
\begin{align}
||f_{i}||_{L^{n}(B_{1})}  &  =\mu^{-i\alpha}\mu^{-i}||f||_{L^{n}(B_{\mu^{i}}%
)}\label{VV}\\
&  \leq\mu^{-i\alpha}\mu^{-i}\left\vert B_{\mu^{i}}\right\vert ^{1/n}\delta
\mu^{i\alpha}=\left(  \omega_{n}\right)  ^{1/n}\delta.
\end{align}
Note that $F_{i}$ satisfies
\[
F_{i}(0)=\frac{F(c_{i})}{\mu^{i\alpha}}=\frac{F(D^{2}P_{i})}{\mu^{i\alpha}}=0
\]
and
\[
DF_{i}(N)=DF(\mu^{i\alpha}N+c_{i})
\]
so $F_{i}$ also satisfies the $\varepsilon_{0}(n,\lambda,\Lambda,\bar{\alpha
})$ closeness condition (\ref{Cm1}) when $F$ does. Since $||v_{i}%
||_{L^{\infty}(B_{1})}\leq1$, by applying Lemma \ref{lemma 2.4} to (\ref{V})
considering (\ref{VV}) we see that there exists $h\in C^{2}(\bar{B}_{3/7})$
such that
\begin{equation}
||v_{i}-h||_{L^{\infty}(B_{3/7})}\leq\omega_{n}^{1/n}C_{3}\delta\label{L4}%
\end{equation}
and $h$ solves the following boundary value problem:
\begin{align}
F_{i}(D^{2}h)  &  =0\hspace{0.2cm}in\hspace{0.1cm}B_{1/2}\nonumber\\
h  &  =v_{i}\hspace{0.4cm}on\hspace{0.1cm}\partial{B_{1/2}}. \label{hcondv}%
\end{align}
Then from the definition of $F_{i}$ above, it follows that
\begin{equation}
F(\mu^{i\alpha}D^{2}h+c_{i})=0\hspace{0.2cm}in\hspace{0.2cm}B_{1/2}.
\label{L5}%
\end{equation}
Now apply Theorem \ref{Theorem 1.2} to $h$ so see that
\begin{align}
||h||_{C^{2,\bar{\alpha}}(B_{1/4})}  &  \leq C_{1}||h||_{L^{\infty}(\partial
B_{1/2})}\label{H1}\\
&  \leq C_{1}||v_{i}||_{L^{\infty}(\partial B_{1/2})}\\
&  \leq C_{1} \label{H1B}%
\end{align}
from (\ref{hcondv}) and the maximum principle (cf. \cite[Proposition 2.13]%
{CC}), and the last inequality follows from (\ref{vleq1}). Since $h$ is
$C^{2,\bar{\alpha}}$, there exists a polynomial $\bar{P}$ given by
\[
\bar{P}(x)=h(0)+Dh(0)\cdot x+\frac{1}{2}x^{t}D^{2}h(0)\cdot x
\]
such that
\begin{equation}
||h-\bar{P}||_{L^{\infty}(B_{\mu})}\leq C_{1}\mu^{2+\bar{\alpha}}. \label{L7}%
\end{equation}

From (\ref{L4}), (\ref{L2B}) and (\ref{L7}) we have
\begin{align}
||v_{i}-\bar{P}||_{L^{\infty}(B_{\mu})}  &  \leq||v_{i}-h||_{L^{\infty}%
(B_{\mu})}+||h-\bar{P}||_{L^{\infty}(B_{\mu})}\nonumber\\
&  \leq\omega_{n}^{1/n}C_{3}\delta+C_{1}\mu^{2+\bar{\alpha}}\nonumber\\
&  \leq2C_{1}\mu^{2+\bar{\alpha}}\nonumber\\
&  \leq\mu^{2+\alpha} \label{L8}%
\end{align}
where the last two inequalities follow from (\ref{GG}) and (\ref{L2}).

Rescaling the bound (\ref{L8}) back via (\ref{vedf}) we see that
\begin{equation}
|u(x)-P_{i}(x)-\mu^{i(2+\alpha)}\bar{P}(\mu^{-i}x)|\leq\mu^{(2+\alpha)(i+1)}
\label{L9}%
\end{equation}
for all $x\in B_{\mu^{i+1}}.$

We define
\begin{equation}
P_{i+1}(x)=P_{i}(x)+\mu^{i(2+\alpha)}\bar{P}(\mu^{-i}x) \label{H2}%
\end{equation}
and we have
\begin{equation}
\mathbf{c}_{i+1}=\mathbf{c}_{i}+\mu^{i\alpha}D^{2}h(0). \label{L5s}%
\end{equation}
From (\ref{L9}) we see that
\[
||u-P_{i+1}||_{L^{\infty}(B_{\mu^{i+1}})}\leq\mu^{(i+1)(2+\alpha)}%
\]
which proves (\ref{L1a}) for $k=i+1.$ Now from (\ref{L5}) and (\ref{L5s}) we
get
\[
F(\mathbf{c}_{i+1})=0
\]
proving (\ref{L1b}). Now evaluating (\ref{H2}) and its first and second
derivates at $x=0$ yields
\begin{align*}
a_{i+1}-a_{i}  &  =\mu^{i(2+\alpha)}\bar{P}(0)\\
\vec{b}_{i+1}-\vec{b}_{i}  &  =\mu^{i(1+\alpha)}D\bar{P}(0)\\
\mathbf{c}_{i+1}-\mathbf{c}_{i}  &  =\mu^{i\alpha}D^{2}\bar{P}(0).
\end{align*}
Thus
\begin{align*}
&  |a_{i+1}-a_{i}|+\mu^{i}\left\Vert \vec{b}_{i+1}-\vec{b}_{i}\right\Vert
+\mu^{2i}\left\Vert \mathbf{c}_{i+1}-\mathbf{c}_{i}\right\Vert \\
&  =\mu^{i(2+\alpha)}(|h(0)|+\left\Vert Dh(0)\right\Vert +\left\Vert
D^{2}h(0)\right\Vert )\\
&  \leq\mu^{i(2+\alpha)}C_{1}%
\end{align*}
by (\ref{H1B}), proving (\ref{L1}). This proves claim \ref{claim_CL3}.
\end{proof}

Now we return to proving Lemma \ref{lemma 2.6}, which will follow by arguments
similar to those used in the proof of Theorem \ref{Theorem 1.2} following
(\ref{Pmarker}). In particular, define%
\[
P=\lim_{i\rightarrow\infty}P_{i}=\sum_{i=0}^{\infty}(P_{i+1}-P_{i})
\]
which will have coefficients%
\begin{align*}
a  &  =\sum_{i=0}^{\infty}(a_{i+1}-a_{i})\\
\vec{b}  &  =\sum_{i=0}^{\infty}(\vec{b}_{i+1}-\vec{b}_{i})\\
\mathbf{c}  &  =\sum_{i=0}^{\infty}(\mathbf{c}_{i+1}-\mathbf{c}_{i}).
\end{align*}
Note that by (\ref{L1})
\begin{align*}
\left\vert a_{i+1}-a_{i}\right\vert  &  \leq C_{1}\mu^{i(2+\alpha)}\\
\left\Vert \vec{b}_{i+1}-\vec{b}_{i}\right\Vert  &  \leq C_{1}\mu
^{i(1+\alpha)}\\
\left\Vert \mathbf{c}_{i+1}-\mathbf{c}_{i}\right\Vert  &  \leq C_{1}%
\mu^{i\alpha}.
\end{align*}
We conclude that the tails of the constant, linear, and quadratic terms of the
polynomial series converge uniformly with upper bounds given by
\begin{align*}
\left\vert \sum_{j=i}^{\infty}(a_{j+1}-a_{j})\right\vert  &  \leq C_{1}%
\mu^{i(2+\alpha)}\frac{1}{1-\mu^{(2+\alpha)}}\\
\left\vert \sum_{j=i}^{\infty}(\vec{b}_{j+1}-\vec{b}_{j})\right\vert  &  \leq
C_{1}\mu^{i(1+\alpha)}\frac{1}{1-\mu^{(1+\alpha)}}\\
\left\vert \sum_{j=i}^{\infty}(\mathbf{c}_{j+1}-\mathbf{c}_{j})\right\vert  &
\leq C_{1}\mu^{i\alpha}\frac{1}{1-\mu^{\alpha}}%
\end{align*}
respectively. Thus $P$ is well-defined. Next,
\begin{align*}
||u-P||_{L^{\infty}(B_{\mu^{i}})}  &  \leq||u-P_{i}||_{L^{\infty}(B_{\mu^{i}%
})}+\sum_{j=i}^{\infty}||P_{j+1}-P_{j}||_{L^{\infty}(B_{\mu^{i}})}\\
&  \leq\mu^{i(2+\alpha)}+\sum_{j=i}^{\infty}[|a_{j+1}-a_{j}|+\mu^{i}\left\Vert
\vec{b}_{j+1}-\vec{b}_{j}\right\Vert +\frac{1}{2}\mu^{2i}\left\Vert
\mathbf{c}_{j+1}-\mathbf{c}_{j}\right\Vert ]\\
&  \leq\mu^{i(2+\alpha)}+C_{1}\left\{
\begin{array}
[c]{c}%
\mu^{i(2+\alpha)}\frac{1}{1-\mu^{(2+\alpha)}}\\
+\mu^{i}\mu^{i(1+\alpha)}\frac{1}{1-\mu^{(1+\alpha)}}\\
+\mu^{2i}\mu^{i\alpha}\frac{1}{1-\mu^{\alpha}}%
\end{array}
\right\} \\
&  \leq C_{4}\mu^{i(2+\alpha)}%
\end{align*}
where
\[
C_{4}=1+C_{1}\left(  \frac{1}{1-\mu^{\alpha}}+\frac{1}{1-\mu^{\alpha}}%
+\frac{1}{1-\mu^{\alpha}}\right)  .
\]
Clearly we have
\[
|DP(0)|+||D^{2}P||\leq C_{1}\frac{1}{1-\mu^{1+\alpha}}+C_{1}\frac{1}%
{1-\mu^{\alpha}}.
\]
We see that (\ref{P1}) holds good for $C_{4}$. This proves the lemma.
\end{proof}

\begin{proof}
[Proof of Theorem \ref{Theorem 1.3}]Fix $\alpha<\bar{\alpha}$. We will first
prove that the estimate (\ref{AAA}) holds at the origin, in particular, we
show that there exists a polynomial of degree 2 such that
\begin{align}
||u-P||_{L^{\infty}(B_{r})}  &  \leq C_{2}^{\prime}r^{2+\alpha}\hspace
{0.2cm},\forall r\leq1\nonumber\\
|DP(0)|+||D^{2}P||  &  \leq C_{2}^{\prime} \label{P2}%
\end{align}
where $C_{2}^{\prime}=C_{2}^{\prime}(||u||_{L^{\infty}(B_{1})},|f|_{C^{\alpha
}(B_{1})},n,\lambda,\Lambda,\bar{\alpha},\alpha,C_{1})$, $0<\alpha<\bar
{\alpha}$ and $\bar{\alpha}$ is the H\"{o}lder exponent appearing in
(\ref{EE}):%
\[
||u||_{C^{2,\bar{\alpha}}(B_{1/2})}\leq C_{1}||u||_{L^{\infty}(B_{1})}.
\]

Let
\[
\tilde{f}(x)=f(x)-f(0)
\]
so that the $C^{\alpha}$ function $\tilde{f}(x)$ satisfies the following
\[
(\frac{1}{|B_{1}|}\int_{B_{1}}|\tilde{f}|^{n})^{1/n}\leq\left\Vert \tilde
{f}\right\Vert _{C^{\alpha}(B_{1})}.
\]

The proof now follows directly from Lemma \ref{lemma 2.6}, if we do the
following rescaling for all $x\in B_{1}$: Consider the following function
\[
\tilde{u}(x)=\frac{u(x)}{\delta^{-1}|f|_{C^{\alpha}(B_{1})}+||u||_{L^{\infty
}(B_{1})}}=\frac{u(x)}{T}.
\]
with $\delta(n,\lambda,\Lambda,\alpha,\bar{\alpha})$ as defined in (\ref{GG}).
Note that
\begin{align}
\delta T  &  =\left\Vert \tilde{f}\right\Vert _{C^{\alpha}(B_{1})}%
+\delta||u||_{L^{\infty}(B_{1})}\nonumber\\
&  >\left\Vert \tilde{f}\right\Vert _{C^{\alpha}(B_{1})} \label{RR}%
\end{align}
and that
\[
||\tilde{u}||_{L^{\infty}(B_{1})}\leq1.
\]
Now we consider the operator
\[
F_{T}(N)=\frac{1}{T}F(TN)
\]
defined for all $N\in S_{n}$.\newline Note that $F_{T}$ satisfies the
following properties:

\begin{enumerate}
[label=(\roman*)]

\item $F_{T}$ has the same ellipticity constants $\lambda$ and $\Lambda$ as
$F$.

\item $DF_{T}$ satisfies condition (\ref{Cm1}) with the same constant
$\varepsilon_{0}\left(  n,\lambda,\Lambda,\bar{\alpha}\right)  $ if $DF$ does.
\end{enumerate}

We see that $\tilde{u}$ satisfies the equation%
\[
F_{T}(D^{2}\tilde{u}(x))=\frac{1}{T}F(TD^{2}\tilde{u}(x))=\frac{1}{T}%
F(D^{2}u(x))=\frac{\tilde{f}(x)}{T}=f_{T}(x),
\]
where for $r\leq1$ we compute
\begin{align*}
\left(  \frac{1}{|B_{r}|}\int_{B_{r}}|f_{T}|^{n}\right)  ^{1/n}  &  \leq
\frac{\left\Vert \tilde{f}\right\Vert _{C^{\alpha}(B_{r})}}{T}\left(  \frac
{1}{1+\alpha}\right)  ^{1/n}r^{\alpha}\\
&  <\delta r^{\alpha}%
\end{align*}
recalling (\ref{RR}) in the last inequality.

Therefore, the equation
\[
F_{T}(D^{2}\tilde{u}(x))=f_{T}(x)
\]
satisfies all the conditions of Lemma \ref{lemma 2.6} and hence the function
$\tilde{u}$ satisfies the estimates (\ref{P1}). In particular, there exists
$\tilde{P}$ such that \newline%
\begin{align}
||\tilde{u}-\tilde{P}||_{L^{\infty}(B_{r})}  &  \leq C_{4}r^{2+\alpha},\text{
}\hspace{0.2cm}\forall r\leq1,\label{fP}\\
|D\tilde{P}(0)|+||D^{2}\tilde{P}||  &  \leq C_{4}(n,\lambda,\Lambda,\alpha)
\label{fP1}%
\end{align}
that is, letting
\[
P=\left(  \delta^{-1}|f|_{C^{\alpha}(B_{1})}+||u||_{L^{\infty}(B_{1})}\right)
\tilde{P}%
\]
we have
\begin{align}
||u-P||_{L^{\infty}(B_{r})}  &  \leq C_{4}r^{2+\alpha}\hspace{0.2cm}\forall
r\leq1,\nonumber\\
|DP(0)|+||D^{2}P||  &  \leq\left(  \delta^{-1}|f|_{C^{\alpha}(B_{1}%
)}+||u||_{L^{\infty}(B_{1})}\right)  C_{4}(n,\lambda,\Lambda,\alpha).
\end{align}
Next, consider any point $x_{0}$ in $B_{1/2}$. The remainder of the proof
follows verbatim from the argument following (\ref{cnotprime}).
\end{proof}

\section{Appendix 1:Pointwise H\"{o}lder implies H\"{o}lder}

\begin{lemma}
Suppose that
\[
U:B_{R}(0)\subset%
\mathbb{R}
^{n}\rightarrow%
\mathbb{R}
\]
satsifies the following condition for some fixed $p>0$. For every $y$ there
exists a linear function $L_{y}$ such that
\begin{equation}
\left\vert U(x)-L_{y}(x)\right\vert \leq C_{1}|x-y|^{1+p}. \label{cond}%
\end{equation}
Then for all $x\in B_{R/2}(0)$ we have%
\[
\left\vert DU(x)-DU(0)\right\vert \leq C_{1}\left(  2+2^{1+p}\right)
\left\vert x\right\vert ^{p}.
\]

\end{lemma}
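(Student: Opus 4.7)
The plan is as follows. Setting $x=y$ in the hypothesis (\ref{cond}) forces $L_y(y)=U(y)$, so each $L_y$ must have the form
\[
L_y(x)=U(y)+\vec{a}_y\cdot(x-y)
\]
for some vector $\vec{a}_y\in\mathbb{R}^n$. Since (\ref{cond}) with $p>0$ says $U(x)=U(y)+\vec{a}_y\cdot(x-y)+o(|x-y|)$, the function $U$ is differentiable at every $y\in B_R(0)$ and $DU(y)=\vec{a}_y$. Thus the stated conclusion amounts to showing
\[
|\vec{a}_x-\vec{a}_0|\leq C_1(2+2^{1+p})|x|^p\qquad\text{for all }x\in B_{R/2}(0).
\]

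Fix $x\in B_{R/2}(0)\setminus\{0\}$ and set
\[
w(z):=L_0(z)-L_x(z)=\bigl(U(0)-U(x)+\vec{a}_x\cdot x\bigr)+(\vec{a}_0-\vec{a}_x)\cdot z,
\]
which is an affine function of $z$. The strategy is to pick two test points $z_1,z_2$ in $B_R(0)$ where both $L_0$ and $L_x$ are controlled by (\ref{cond}), and where $z_1-z_2$ points in the direction of $\vec{a}_0-\vec{a}_x$ so that the difference $w(z_1)-w(z_2)=(\vec{a}_0-\vec{a}_x)\cdot(z_1-z_2)$ isolates exactly the quantity we want to bound.

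Concretely, let $e=(\vec{a}_0-\vec{a}_x)/|\vec{a}_0-\vec{a}_x|$ (assuming the difference is nonzero; otherwise there is nothing to prove) and choose
\[
z_1=x+|x|e,\qquad z_2=x-|x|e.
\]
Then $|z_i|\leq 2|x|<R$ (using $x\in B_{R/2}$) and $|z_i-x|=|x|$. Applying the triangle inequality together with (\ref{cond}) at $y=0$ and at $y=x$ yields, for $i=1,2$,
\[
|w(z_i)|\leq|U(z_i)-L_0(z_i)|+|U(z_i)-L_x(z_i)|\leq C_1\bigl(|z_i|^{1+p}+|z_i-x|^{1+p}\bigr)\leq C_1(2^{1+p}+1)|x|^{1+p}.
\]
Since $w(z_1)-w(z_2)=(\vec{a}_0-\vec{a}_x)\cdot(z_1-z_2)=2|x|\,|\vec{a}_0-\vec{a}_x|$, the triangle inequality on $w(z_1)-w(z_2)$ gives
\[
2|x|\,|\vec{a}_0-\vec{a}_x|\leq 2C_1(2^{1+p}+1)|x|^{1+p},
\]
hence $|\vec{a}_x-\vec{a}_0|\leq C_1(1+2^{1+p})|x|^p\leq C_1(2+2^{1+p})|x|^p$, which is the desired bound.

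There is no real obstacle here beyond choosing the test points; the only subtlety is ensuring the chosen points $z_1,z_2$ still lie inside $B_R(0)$, which is exactly what the restriction $x\in B_{R/2}(0)$ guarantees. The factor $(2+2^{1+p})$ in the statement is not tight under this proof (one actually gets $(1+2^{1+p})$), but either version is immediate from the argument above.
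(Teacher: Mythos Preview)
Your proof is correct and follows essentially the same strategy as the paper's: pick a test point in the direction of the gradient (difference) so that the affine comparison isolates its norm. The paper first normalizes so that $U(0)=DU(0)=0$ and then uses a single test point $x_{1}=x_{0}+|x_{0}|\,DU(x_{0})/\|DU(x_{0})\|$, which yields exactly the constant $2+2^{1+p}$; your symmetric two-point variant is a minor modification of the same idea and, as you observe, gives the slightly sharper $1+2^{1+p}$.
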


\begin{proof}
We will assume by adding a linear function that
\begin{align}
U(0)  &  =0\label{b1}\\
DU(0)  &  =0.\nonumber
\end{align}
First note that (\ref{cond}) implies that the derivative exists at any $x_{0}$
and
\[
L_{x_{0}}=DU(x_{0})\cdot(x-x_{0})+U(x_{0})
\]
thus
\[
\left\vert U(x)-DU(x_{0})\cdot(x-x_{0})-U(x_{0})\right\vert \leq C_{1}%
|x-x_{0}|^{1+p}.
\]
That is%
\begin{equation}
\left\vert DU(x_{0})\cdot(x-x_{0})\right\vert \leq C_{1}|x-x_{0}%
|^{1+p}+\left\vert U(x)\right\vert +\left\vert U(x_{0})\right\vert .
\label{b2}%
\end{equation}
Now consider any point $x_{0}\neq0$ with $x_{0}\in B_{R/2}.$ Let
\begin{equation}
A=DU(x_{0}) \label{defA}%
\end{equation}
and let
\[
e=\frac{A}{\left\Vert A\right\Vert }.
\]
Now consider the point
\[
x_{1}=\left(  x_{0}+|x_{0}|\frac{A}{\left\Vert A\right\Vert }\right)
\]
which satisfies%
\[
\left\vert x_{1}\right\vert \leq2|x_{0}|.
\]
So $x_{1}\in B_{R}.$ Letting $y=0$ in (\ref{cond}) and using (\ref{b1}) we
conclude
\begin{equation}
\left\vert U(x_{1})\right\vert \leq C_{1}2^{1+p}|x_{0}|^{1+p}. \label{b3}%
\end{equation}
Plugging $x_{1}$ into (\ref{b2}) and using (\ref{b3})
\begin{align}
\left\vert DU(x_{0})\cdot(x_{1}-x_{0})\right\vert  &  \leq C_{1}|x_{1}%
-x_{0}|^{1+p}+\left\vert U(x_{1})\right\vert +\left\vert U(x_{0})\right\vert
\\
&  \leq C_{1}|x_{1}-x_{0}|^{1+p}+C_{1}2^{1+p}|x_{0}|^{1+p}+C_{1}|x_{0}|^{1+p}.
\end{align}
But%
\[
x_{1}-x_{0}=x_{0}+|x_{0}|\frac{A}{\left\Vert A\right\Vert }-x_{0}=|x_{0}%
|\frac{A}{\left\Vert A\right\Vert }=|x_{0}|\frac{DU(x_{0})}{\left\Vert
DU(x_{0})\right\Vert }%
\]
and
\[
\left\vert x_{1}-x_{0}\right\vert =\left\vert |x_{0}|\frac{DU(x_{0}%
)}{\left\Vert DU(x_{0})\right\Vert }\right\vert =|x_{0}|.
\]
So we have shown that
\begin{equation}
|x_{0}|\left\Vert DU(x_{0})\right\Vert \leq C_{1}|x_{0}|^{1+p}+C_{1}%
2^{1+p}|x_{0}|^{1+p}+C_{1}|x_{0}|^{1+p}%
\end{equation}
that is
\[
\left\Vert DU(x_{0})\right\Vert \leq C_{1}\left(  2+2^{1+p}\right)
|x_{1}-x_{0}|^{p}.
\]

\end{proof}

\begin{corollary}
\label{CC81}Suppose that
\[
u:B_{1}(0)\subset%
\mathbb{R}
^{n}\rightarrow%
\mathbb{R}
\]
satisfies the following condition for some fixed $p>0$. For every $y\in
B_{1/2}$ there exists a quadratic function $Q_{y}$ such that
\begin{equation}
\left\vert U(x)-Q_{y}(x)\right\vert \leq C_{1}|x-y|^{2+\alpha}.
\end{equation}
Then for $x\in B_{1/4}(0)$%
\[
\sup_{i,j}\left\vert u_{ij}(x)-u_{ij}(0)\right\vert \leq\left(  2+2^{2+\alpha
}\right)  ^{2}C_{1}\left\vert x\right\vert ^{\alpha}.
\]

\end{corollary}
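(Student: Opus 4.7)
The plan is to mimic the argument of the preceding lemma but for second derivatives, replacing the single auxiliary point used there by a symmetric pair $x_\pm$, so that a second-order finite difference isolates the Hessian difference. Observe first that the pointwise bound $|u(x) - Q_y(x)| \leq C_1 |x-y|^{2+\alpha}$ forces $Q_y$ to be uniquely determined: its constant, linear, and quadratic coefficients must equal $u(y)$, $Du(y)$, and $D^2 u(y)$ respectively. Consequently the matrix $A := D^2 u(x_0) - D^2 u(0)$ is well defined and symmetric, with no further regularity hypotheses needed.

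Fix $x_0 \in B_{1/4}(0) \setminus \{0\}$, and pick a unit vector $e$ achieving $|e^T A e| = \|A\|$. Set $x_\pm := x_0 \pm |x_0|\, e$; both lie in $B_{2|x_0|}(0) \subset B_1(0)$. The identity I would rely on is
\[
(Q_{x_0} - Q_0)(x_+) + (Q_{x_0} - Q_0)(x_-) - 2(Q_{x_0} - Q_0)(x_0) = |x_0|^2\, e^T A e,
\]
obtained by direct expansion of the two quadratic polynomials: in the symmetric second difference, the constant and linear parts of $Q_{x_0} - Q_0$ cancel outright, while the quadratic part has Hessian exactly $A$ and contributes $\langle A(|x_0|e), |x_0|e \rangle$.

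Next I would bound the left side via the triangle inequality, writing $Q_{x_0} - Q_0 = (Q_{x_0} - u) + (u - Q_0)$ and applying the hypothesis twice: at $y = x_0$ the relevant distances are $|x_\pm - x_0| = |x_0|$ and $|x_0 - x_0| = 0$; at $y = 0$ they are $|x_\pm| \leq 2|x_0|$ and $|x_0|$. Summing the three contributions gives
\[
|x_0|^2\, \|A\| \leq 2 C_1 (1 + 2^{2+\alpha})|x_0|^{2+\alpha} + 2 C_1 |x_0|^{2+\alpha} = 2 C_1 (2 + 2^{2+\alpha}) |x_0|^{2+\alpha},
\]
and hence $\|A\| \leq 2C_1(2+2^{2+\alpha})|x_0|^\alpha \leq (2+2^{2+\alpha})^2 C_1 |x_0|^\alpha$. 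Since $|u_{ij}(x_0) - u_{ij}(0)|$ is dominated by the operator norm $\|A\|$, the stated bound follows.

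The only delicate step is spotting the second-difference combination that cleanly extracts $e^T A e$ from $Q_{x_0} - Q_0$ while killing the lower-order terms; once that identity is in place, the rest is just triangle-inequality bookkeeping exactly analogous to the single-point argument used in the preceding lemma, with $|x_0|^{1+p}$ replaced by $|x_0|^{2+\alpha}$ and one test point replaced by the symmetric pair $x_\pm$.
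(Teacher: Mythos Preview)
Your argument is correct, and in fact cleaner than the paper's. The paper proceeds by applying the preceding first-order lemma twice in succession: once with $p=1+\alpha$ to pass from the quadratic approximations $Q_y$ to a pointwise $C^{1,1+\alpha}$ bound on $u$, and then again (with $R=1/2$) to each partial derivative $u_i$ to obtain the $C^{\alpha}$ bound on $D^2u$. That two-step route explains the squared constant $(2+2^{2+\alpha})^2$. By contrast, you go directly to the Hessian in a single step, using the symmetric second difference $P(x_0+h)+P(x_0-h)-2P(x_0)=h^{T}(D^2P)h$ applied to $P=Q_{x_0}-Q_0$ with $h=|x_0|e$; this kills the constant and linear parts outright and isolates $e^{T}Ae$. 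Your bookkeeping then yields the sharper constant $2(2+2^{2+\alpha})$, which you correctly note is dominated by $(2+2^{2+\alpha})^2$.

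One further advantage of your route: the paper's first application of the lemma is not literally a substitution, since the lemma is stated for \emph{linear} $L_y$ while here the $Q_y$ are quadratic; making that step rigorous requires either an adaptation of the lemma's proof or an a~priori bound on $\|D^2Q_y\|$. Your second-difference identity sidesteps this issue entirely. The only small point worth making explicit in your write-up is that the hypothesis at $y=0$ and $y=x_0$ is available because both lie in $B_{1/2}$, and the evaluation points $x_\pm\in B_{2|x_0|}\subset B_{1/2}\subset B_1$ are admissible---which you do check.
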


\begin{proof}
As before subtract off a quadratic function so that $u$ vanishes at secord
order at $0.$ Apply the previous Lemma with $p=1+\alpha$ and conclude that for
all $x\in B_{1/2}$%
\[
\left\vert DU(x)-DU(0)\right\vert \leq C_{1}\left(  2+2^{2+\alpha}\right)
\left\vert x\right\vert ^{1+\alpha}%
\]
that is
\[
\left\vert u_{i}(x)\right\vert \leq\left(  2+2^{2+\alpha}\right)
C_{1}|x|^{1+\alpha}.
\]
So we apply the previous Lemma, with $R=1/2$ and conclude that
\[
\left\vert Du_{i}(x)\right\vert \leq\left(  2+2^{2+\alpha}\right)  ^{2}%
C_{1}|x|^{\alpha}.
\]

\end{proof}

\section{Appendix 2: Cordes-Nirenberg\label{cnsection}}

In \cite[Lemma 3]{N53}, Nirenberg proved the following result (slightly reworded).

\begin{lemma}
\label{nlemma}Let $U=(u_{1},u_{2})$ be an $%
\mathbb{R}
^{2}$-valued continuous function defined in a domain $B_{1}\subset$ $%
\mathbb{R}
^{2}$ having continuous first derivatives satisfying
\begin{equation}
u_{1,1}^{2}+u_{1,2}^{2}+u_{2,1}^{2}+u_{2,2}^{2}\leq k\left(  u_{1,2}%
u_{2,1}-u_{1,1}u_{2,2}\right)  +k_{1} \label{ncond}%
\end{equation}
and let $d<1$.

Then there exists $M,\alpha$ depending on $k,k_{1}$, and $d$ such that
\[
\int\int_{B_{d}(0)}r^{-\alpha}\left(  u_{1,1}^{2}+u_{1,2}^{2}+u_{2,1}%
^{2}+u_{2,2}^{2}\right)  dxdy\leq M.
\]

\end{lemma}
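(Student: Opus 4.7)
My plan is to establish Morrey-type decay $\phi(r):=\iint_{B_{r}(0)}|DU|^{2}\,dx\leq M_{0}r^{\beta}$ for some $\beta=\beta(k)>0$, where $|DU|^{2}=u_{1,1}^{2}+u_{1,2}^{2}+u_{2,1}^{2}+u_{2,2}^{2}$, and then to conclude the weighted integral bound via a dyadic decomposition of $B_{d}$. The essential structural input is that the quantity $u_{1,2}u_{2,1}-u_{1,1}u_{2,2}=-\det(DU)$ appearing on the right of (\ref{ncond}) is a null Lagrangian in dimension two: for any $\eta\in C_{c}^{1}(B_{1})$ and any constant $c_{1}$, mollifying $U$ and integrating by parts gives the divergence-structure identity
\[
\iint \eta^{2}\det(DU)\,dx=-2\iint\eta\,(u_{1}-c_{1})\bigl(\eta_{x_{1}}u_{2,2}-\eta_{x_{2}}u_{2,1}\bigr)\,dx.
\]

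Multiplying (\ref{ncond}) by $\eta^{2}$, integrating, applying this identity together with Cauchy--Schwarz, and absorbing the $\iint \eta^{2}|DU|^{2}$ factor on the left yields
\[
\iint \eta^{2}|DU|^{2}\,dx\leq 4k^{2}\iint |D\eta|^{2}(u_{1}-c_{1})^{2}\,dx+2k_{1}\iint \eta^{2}\,dx.
\]
Taking $\eta$ to be a standard cutoff with $\eta=1$ on $B_{r}$, $\operatorname{supp}\eta\subset B_{2r}$, $|D\eta|\leq 2/r$, and $c_{1}$ the mean of $u_{1}$ over the annulus $B_{2r}\setminus B_{r}$, Poincaré--Wirtinger on that annulus (with constant of order $r$) converts the first term into a multiple of $\int_{B_{2r}\setminus B_{r}}|DU|^{2}$. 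The Caccioppoli-type bound
\[
\phi(r)\leq C(k)\bigl[\phi(2r)-\phi(r)\bigr]+Ck_{1}r^{2}
\]
follows, and Widman's hole-filling trick of absorbing the $\phi(r)$ term yields $\phi(r)\leq\theta\,\phi(2r)+C''k_{1}r^{2}$ with $\theta=C(k)/(1+C(k))<1$. Iterating downward from an anchor scale $r_{0}\in(d,1)$ then gives $\phi(r)\leq M_{0}r^{\beta}$ with $\beta=\min(2,\log_{2}(1/\theta))>0$.

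The weighted integral is then bounded by a dyadic decomposition $B_{d}=\bigsqcup_{j\geq 0}(B_{2^{-j}d}\setminus B_{2^{-j-1}d})$: using $r\geq 2^{-j-1}d$ on the $j$-th annulus,
\[
\iint_{B_{d}}r^{-\alpha}|DU|^{2}\,dx\,dy\leq \sum_{j=0}^{\infty}(2^{-j-1}d)^{-\alpha}\phi(2^{-j}d)\leq M_{0}2^{\alpha}d^{\beta-\alpha}\sum_{j=0}^{\infty}2^{j(\alpha-\beta)},
\]
which converges for any $\alpha\in(0,\beta)$, yielding the stated $M=M(k,k_{1},d,\alpha)$.

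The main obstacles are twofold. First, one must track constants sharply in the hole-filling step to ensure $\theta<1$; this couples the Poincaré--Wirtinger constant on $B_{2r}\setminus B_{r}$ with the Cauchy--Schwarz absorption inside the null-Lagrangian representation. The pointwise inequality $|\det DU|\leq |DU|^{2}/2$ shows that (\ref{ncond}) forces $(1-k/2)|DU|^{2}\leq k_{1}$, so the cleanest version of the argument operates in the natural range $k<2$. Second, the iteration requires an anchor bound on $\phi(r_{0})$ depending only on $k,k_{1},d$; in the range $k<2$ this is immediate from the pointwise estimate on $|DU|^{2}$, while the borderline case is handled by a further iteration on nested radii in $(d,1)$ using the same Caccioppoli argument.
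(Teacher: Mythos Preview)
The paper does not supply its own proof of this lemma; it is quoted as Nirenberg's result and used as a black box, so there is no in-paper argument to compare your outline against. Your route via the null-Lagrangian structure of $\det DU$, a Caccioppoli inequality with Poincar\'e on annuli, Widman's hole-filling, and dyadic summation is the standard modern argument for such Morrey decay and is essentially correct.

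One remark and one gap. The remark: your worry about tracking constants ``to ensure $\theta<1$'' in the hole-filling step is misplaced, since $\theta=C(k)/(1+C(k))<1$ automatically for every finite $C(k)>0$; no sharpness is needed there. The gap concerns the anchor bound $\phi(r_{0})$ when $k\geq 2$. Your Caccioppoli inequality passes information from small scales to larger ones, so ``a further iteration on nested radii in $(d,1)$'' cannot manufacture a top-scale bound out of $k,k_{1},d$ alone; reverting to the pre-Poincar\'e form only trades this for an $L^{2}$ bound on $u_{1}$, which is again not among the stated data. In fact for $k\geq 2$ the lemma, read literally with $M=M(k,k_{1},d)$, is false: take $U(x,y)=(\lambda y,\lambda x)$, so that $|DU|^{2}=2\lambda^{2}$ and $u_{1,2}u_{2,1}-u_{1,1}u_{2,2}=\lambda^{2}$; condition (\ref{ncond}) reads $2\lambda^{2}\leq k\lambda^{2}+k_{1}$ and holds for every $\lambda$, while the weighted integral scales like $\lambda^{2}$. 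The paper's ``slightly reworded'' restatement has evidently suppressed a dependence of $M$ on $\|U\|_{L^{\infty}(B_{1})}$ or on $\int_{B_{1}}|DU|^{2}$ present in Nirenberg's original; once any such anchoring datum is allowed, your argument goes through without further change.
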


With this integral estimate in hand, a univeral H\"{o}lder estimate on the
functions $u_{1}$ and $u_{2}$ follows.

Now suppose that
\begin{equation}
a^{ij}u_{ij}=f. \label{linf}%
\end{equation}
Note
\[
\Delta u=\left(  \delta^{ij}-a^{ij}\right)  u_{ij}+f
\]
which implies
\[
\left\vert \Delta u\right\vert \leq\left\Vert \delta^{ij}-a^{ij}\right\Vert
_{HS}\left\Vert u_{ij}\right\Vert _{HS}+f.
\]
In two dimensions, we have
\[
\left\Vert D^{2}u\right\Vert ^{2}=\left(  \Delta u\right)  ^{2}-2\det\left(
D^{2}u\right)
\]
so%
\[
\left\Vert D^{2}u\right\Vert ^{2}\leq(1+\varepsilon)\left\Vert \delta
^{ij}-a^{ij}\right\Vert _{HS}^{2}\left\Vert D^{2}u\right\Vert _{HS}%
^{2}+(1+\frac{1}{\varepsilon})f^{2}+2\left(  u_{1,2}u_{2,1}-u_{1,1}%
u_{2,2}\right)  .
\]
In particular, (\ref{ncond}) holds with constants
\begin{align*}
k  &  =\frac{2}{1-(1+\varepsilon)\left\Vert \delta^{ij}-a^{ij}\right\Vert
_{HS}^{2}}\\
k_{1}  &  =(1+\frac{1}{\varepsilon})\left\Vert f\right\Vert _{L^{\infty}}.
\end{align*}
Thus in two dimensions a $C^{1,\alpha}$ estimate is available provided
\[
\left\Vert \delta^{ij}-a^{ij}\right\Vert ^{2}<1.
\]
For higher dimensions, in \cite[Lemma 3]{N54}, Nirenberg stated the following generalization

\begin{theorem}
\label{notproved}Let $U=(u_{1},u_{2},...,u_{n})$ be an $%
\mathbb{R}
^{n}$-valued continuous function defined in a domain $B_{1}\subset$ $%
\mathbb{R}
^{n}$ having continuous first derivatives satisfying
\begin{equation}
\sum_{i,j}u_{i,j}^{2}\leq k\sum_{i,j}\left(  u_{i,j}u_{j,i}-u_{i,i}%
u_{j,j}\right)  +k_{1}%
\end{equation}
and in addition
\[
k<\frac{n-1}{n-2}.
\]
Then the functions $u_{i}$ are H\"{o}lder continuous on the interior domain.
\end{theorem}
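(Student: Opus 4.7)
The plan is to reduce the statement to a weighted integral estimate in the style of Nirenberg's two-dimensional argument, and then invoke Morrey's Dirichlet growth theorem. Write the hypothesis in invariant form as
\[
\|DU\|_{HS}^{2}\leq k\bigl[\operatorname{tr}((DU)^{2})-(\operatorname{tr} DU)^{2}\bigr]+k_{1}=-2k\,\sigma_{2}(DU)+k_{1},
\]
so the content of the inequality is a one-sided bound of the Hilbert--Schmidt norm of $DU$ by the second elementary symmetric function of its eigenvalues. The goal is then to show that for some $\alpha=\alpha(k,k_{1},d,n)>0$,
\[
\int_{B_{d}(0)}r^{-\alpha}\|DU\|_{HS}^{2}\,dx\leq M,
\]
exactly as in Lemma \ref{nlemma}, from which Hölder continuity of each $u_{i}$ on $B_{d}$ follows by a standard Campanato/Morrey argument.

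The central ingredient is the \emph{null Lagrangian} structure of $\sigma_{2}$. Each signed pair
\[
u_{i,i}u_{j,j}-u_{i,j}u_{j,i}=\det\!\begin{pmatrix}u_{i,i}&u_{i,j}\\u_{j,i}&u_{j,j}\end{pmatrix}
\]
can be written in divergence form, e.g.\ as $\partial_{i}(u_{i}u_{j,j})-\partial_{j}(u_{i}u_{j,i})$, since partials commute on $C^{2}$ components (and for merely $C^{1}$ components, a mollification step reduces to this). Summing over $i<j$ expresses $\sigma_{2}(DU)$ as a divergence of an explicit vector field quadratic in $U$ and $DU$. I would then test the hypothesis against $r^{-\alpha}\phi^{2}$ where $\phi$ is a smooth cutoff supported in $B_{d}$, integrate $-2k\int r^{-\alpha}\phi^{2}\sigma_{2}(DU)$ by parts, and use Young's inequality to re-absorb the resulting $\|DU\|_{HS}^{2}$ into the left-hand side. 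This produces an iteration inequality of the form
\[
\int_{B_{r}}\|DU\|_{HS}^{2}\leq C\Bigl(\tfrac{r}{R}\Bigr)^{n-2+2\beta}\!\int_{B_{R}}\|DU\|_{HS}^{2}+\text{l.o.t.},
\]
and the classical Dirichlet-growth criterion then delivers $U\in C^{\beta}_{\mathrm{loc}}$.

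The sharpness of the threshold $k<(n-1)/(n-2)$ should emerge from the absorption step. When one integrates $r^{-\alpha}\sigma_{2}(DU)$ by parts, the boundary-type contribution on each sphere $\partial B_{r}$ sees only the $n-1$ tangential directions, while the weight's radial derivative picks up the factor $\alpha r^{-\alpha-1}$ along the single normal direction. Balancing tangential coercivity against the remaining normal term via Cauchy--Schwarz gives an inequality of the schematic form $(1-k\tfrac{n-2}{n-1})\int r^{-\alpha}\|DU\|_{HS}^{2}\leq M$, so a positive $\alpha$ exists precisely when $k\tfrac{n-2}{n-1}<1$, reproducing Nirenberg's threshold (and recovering the $n=2$ case, where there is no restriction because the coefficient of $k$ vanishes).

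The main obstacle will be the bookkeeping in the integration-by-parts step, specifically pinning down the sharp constant $\tfrac{n-2}{n-1}$. In two dimensions, $\sigma_{2}(DU)$ is the full Jacobian determinant, whose divergence structure is essentially trivial and unconditionally usable; in higher dimensions, $\sigma_{2}(DU)$ is only a partial trace of the cofactor information, so the commutator terms produced by the cutoff and by the radial weight genuinely couple to $\|DU\|_{HS}^{2}$ with a dimension-dependent constant. A secondary but real technical point is the mollification needed to justify integration by parts given only continuous first derivatives; this is standard but must be carried out in a way that the one-sided pointwise hypothesis is preserved under the smoothing.
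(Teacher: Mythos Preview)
The paper does not prove this theorem. Look at the label: it is literally \texttt{notproved}. Immediately after the statement, the authors write that ``a proof is not given, although it is stated [in Nirenberg's paper] that the proof of Theorem \ref{notproved} is `similar' to the proof of Lemma \ref{nlemma}.'' The theorem appears in Appendix~2 purely as historical context for the Cordes--Nirenberg theory; the authors then move on to Cordes' 1956 result, which supersedes it with better constants. So there is no proof in the paper against which to compare your proposal.

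That said, your sketch is in the right spirit and is exactly the route Nirenberg alludes to: recognize $\sigma_{2}(DU)$ as a null Lagrangian, test the pointwise hypothesis against $r^{-\alpha}$ times a cutoff, integrate by parts, and absorb. Your heuristic for the threshold $k<(n-1)/(n-2)$ is plausible but not yet an argument; the schematic inequality $(1-k\tfrac{n-2}{n-1})\int r^{-\alpha}\|DU\|_{HS}^{2}\leq M$ is asserted rather than derived, and the actual computation---decomposing $DU$ into radial and tangential blocks on each sphere and tracking how the divergence-form identity for $\sigma_{2}$ interacts with $\nabla(r^{-\alpha})$---is where all the content lives. You flag this yourself as ``the main obstacle,'' which is accurate: what you have written is a reasonable plan, not a proof, and the constant you need does not fall out without a careful spherical decomposition. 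If you want to carry this through, Cordes' 1956 paper (or his 1961 English summary) contains the integral identities you would need, albeit phrased for the linear equation rather than for the system hypothesis here.
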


The proof of Theorem \ref{notproved} would follow from an integral estimate of
the form Lemma \ref{nlemma}. However a proof is not given, although it is
stated \cite[Section 3]{N54} that the proof of Theorem \ref{notproved} is
\textquotedblleft similar"to the proof of Lemma \ref{nlemma}.

In any case, the result of Cordes in 1956 \cite[page 292]{Cordes} provides
better constants: Cordes defines the $K_{\varepsilon}^{\prime}$-condition for
a symmetric matrix with eigenvalues $\lambda_{1},...\lambda_{n}$ as:%
\[
(n-1)\left(  1+\frac{n(n-2)}{(n+1)(n-1)}\right)  \sum_{i<k}(\lambda
_{i}-\lambda_{k})^{2}\leq(1-\varepsilon)\left(  \sum_{i}\lambda_{i}\right)
^{2}.
\]
Cordes proves the following \cite[Satz 8, page 303]{Cordes} :

\begin{theorem}
Suppose the coefficients $a^{ij}$ satisfy a $K_{\varepsilon}^{\prime}%
$-condition. There exists an $\alpha$ depending on $\varepsilon$ such that the
solutions to (\ref{linf}) satisfy an estimate of the form
\[
\left\Vert u\right\Vert _{C^{1,\alpha}(B_{1/2})}<c\left(  \left\Vert
f\right\Vert _{L^{\infty}(B_{1})}+\left\Vert u\right\Vert _{L^{\infty}(B_{1}%
)}\right)  .
\]

\end{theorem}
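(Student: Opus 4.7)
The plan is to reduce the $C^{1,\alpha}$ estimate for the linear equation $a^{ij}u_{ij}=f$ to the Nirenberg-type integral estimate for vector-valued maps (Theorem \ref{notproved}), applied to the gradient $U=\nabla u$, and then convert the resulting weighted $L^2$ bound on $D^2u$ into a Hölder estimate on $\nabla u$ via Morrey-Campanato embedding.

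The first and crucial step is a pointwise algebraic reduction. Fix $x\in B_1$, let $\lambda_1,\ldots,\lambda_n$ denote the eigenvalues of $(a^{ij}(x))$, and choose the normalizing scalar $\gamma(x)=\operatorname{tr}(a^{ij})/n$. Rewriting the equation as $\Delta u = (\delta^{ij}-\gamma^{-1}a^{ij})u_{ij}+\gamma^{-1}f$ and applying Cauchy-Schwarz yields
\[
(\Delta u)^2 \leq (1+\eta)\|\delta-\gamma^{-1}a\|_{HS}^2\,\|D^2u\|_{HS}^2 + (1+\eta^{-1})\gamma^{-2}f^2
\]
for any $\eta>0$. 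A direct computation using the identity $\sum_{i<k}(\lambda_i-\lambda_k)^2=n\sum\lambda_i^2-(\sum\lambda_i)^2$ gives the pointwise formula
\[
\|\delta-\gamma^{-1}a\|_{HS}^2 = \frac{n\sum_{i<k}(\lambda_i-\lambda_k)^2}{(\sum_i\lambda_i)^2},
\]
so the $K_\varepsilon'$-condition translates directly into an upper bound on this HS norm. Combining with the (dimension-free) identity $(\Delta u)^2=\|D^2u\|_{HS}^2+2\sum_{i<j}(u_{ii}u_{jj}-u_{ij}^2)$ and choosing $\eta$ appropriately, the $K_\varepsilon'$-condition—calibrated by its specific prefactor $(n-1)\bigl(1+n(n-2)/((n+1)(n-1))\bigr)$—produces a pointwise inequality of the form
\[
\sum_{i,j}(u_{ij})^2 \leq k\sum_{i,j}\bigl(u_{ij}u_{ji}-u_{ii}u_{jj}\bigr) + k_1
\]
with $k_1=k_1(\|f\|_{L^\infty})$ and, crucially, $k<(n-1)/(n-2)$. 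Since $u_{ij}=u_{ji}$, this is exactly the hypothesis of Theorem \ref{notproved} for the vector field $U=\nabla u=(u_1,\ldots,u_n)$ with $u_i=\partial_i u$.

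The second step applies Theorem \ref{notproved} to $U=\nabla u$, producing constants $M,\alpha>0$ depending on $k$, $k_1$, and the subdomain such that $\int_{B_d}r^{-\alpha}|D^2u|^2\,dx<M$ for any $d<1$, where $M$ depends on $\|f\|_{L^\infty(B_1)}$ through $k_1$ and on $\|u\|_{L^\infty(B_1)}$ through a standard interior Caccioppoli estimate bounding $\int|D^2u|^2$ on intermediate annuli. This weighted $L^2$ bound implies Morrey-type growth $r^{2-n}\int_{B_r(x_0)}|D^2u|^2\leq C r^{2\alpha}$ uniformly for $x_0\in B_{1/2}$, which by the classical Morrey-Campanato equivalence yields $\nabla u\in C^{\alpha}(B_{1/2})$ with the desired estimate
\[
\|u\|_{C^{1,\alpha}(B_{1/2})} \leq c\bigl(\|f\|_{L^\infty(B_1)}+\|u\|_{L^\infty(B_1)}\bigr).
\]
The main obstacle is the calibration in step one: one must verify that the specific numerical coefficient defining the $K_\varepsilon'$-condition is exactly what is required to push the resulting $k$ below the threshold $(n-1)/(n-2)$ demanded by Nirenberg's higher-dimensional lemma, with the gap controlled quantitatively by $\varepsilon$. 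A secondary concern, as the excerpt itself notes, is that the proof of Theorem \ref{notproved} is not actually given in \cite{N54}; one would either cite it as a black box or reconstruct it by adapting the two-dimensional quasiconformal/hodograph argument underlying Lemma \ref{nlemma}.
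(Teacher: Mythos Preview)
The paper does not actually prove this theorem; it is quoted as Cordes's result \cite[Satz 8]{Cordes}, followed only by the remark that ``the proof involves pages of integrals.'' So there is no paper proof to compare against---but your proposed route has a genuine mathematical obstruction that is worth naming, and in fact the paper's surrounding discussion anticipates it.

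Your plan is to reduce the $K_\varepsilon'$-condition to the hypothesis $k<(n-1)/(n-2)$ of Nirenberg's Theorem~\ref{notproved}. Running your own algebra: with $\gamma=\operatorname{tr}(a)/n$ and $\theta=\|\delta-\gamma^{-1}a\|_{HS}^2$, the identity $(\Delta u)^2=\|D^2u\|_{HS}^2+2\sum_{i<j}(u_{ii}u_{jj}-u_{ij}^2)$ combined with your Cauchy--Schwarz step gives $k=1/(1-\theta)$ (up to an $\eta$-perturbation), so the Nirenberg threshold $k<(n-1)/(n-2)$ becomes $\theta<1/(n-1)$. Using your correct formula $\theta=n\sum_{i<k}(\lambda_i-\lambda_k)^2/(\sum\lambda_i)^2$, this amounts to
\[
n(n-1)\sum_{i<k}(\lambda_i-\lambda_k)^2<\Bigl(\sum_i\lambda_i\Bigr)^2.
\]
But the $K_\varepsilon'$-condition only gives the coefficient $(n-1)\bigl(1+\tfrac{n(n-2)}{(n+1)(n-1)}\bigr)=\tfrac{2n^2-2n-1}{n+1}$ in place of $n(n-1)$, and $\tfrac{2n^2-2n-1}{n+1}<n(n-1)$ for every $n\geq 2$. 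In other words, the $K_\varepsilon'$-condition is strictly \emph{weaker} than the hypothesis needed to trigger Nirenberg's lemma; for small $\varepsilon$ the resulting $k$ sits strictly above $(n-1)/(n-2)$ and your reduction fails. This is exactly why the paper remarks that Cordes ``provides better constants'': his theorem is a genuine strengthening of what one gets from Nirenberg's criterion, and cannot be recovered from it. Any proof must follow Cordes's own integral arguments (or a modern substitute) rather than pass through Theorem~\ref{notproved}.
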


The proof involves pages of integrals. In 1961, \cite[Theorem 2]{C61}, Cordes
offered an outline for a refined argument, and summarized the results (in English).\ 

The \textquotedblleft Cordes condition" in the literature is often phrased as
the following:
\begin{equation}
\left\Vert A\right\Vert _{HS}^{2}<\frac{1}{n-1+\delta}\left\vert
Tr(A)\right\vert ^{2}.\label{ccond}%
\end{equation}
Note that this is equivalent (for $\varepsilon$ not equal to but depending on
$\delta$) to the $K_{\varepsilon}$-condition defined by Cordes in \cite[page
292]{Cordes}:%
\[
(n-1)\sum_{i<k}(\lambda_{i}-\lambda_{k})^{2}\leq(1-\varepsilon)\left(
\sum_{i}\lambda_{i}\right)  ^{2}.
\]
Cordes showed solutions to (\ref{linf}) will be $C^{\alpha}$ for $f$
bounded$.$ Talenti \cite{Talenti} applied this condition to show that
solutions to (\ref{linf}) exist in $W^{2,2}$ when $f\in L^{2}.$ \ 

It is interesting to look at the linearized operator for nonlinear equations
of the form (\ref{f}), in particular when equation (\ref{f}) is neither convex
nor concave. If the linearized operator satisfies a $K_{\varepsilon}^{\prime}%
$-condition, then $C^{3}$ solutions will be $C^{2,\alpha}$ with uniform
estimates based on the $C^{1}$ norm.

In general, a regularity boosting with estimates for equations of the form
(\ref{f}) can follow by applying Cordes-Nirenberg type results, locally, to
smooth solutions, even when the operator does not globally satisfy such a
condition. For a given nonlinear equation one may differentiate (\ref{f}).
When the oscillation of the linearized operator $F^{ij}$ depends continuously
on the oscillation of $D^{2}u,$ there will be a $\delta_{0}$ such that if the
oscillation of the Hessian is smaller than $\delta_{0}$ the oscillation of
$F^{ij}$ will be less than $\varepsilon_{0},$ thus $C^{2,\alpha}$ estimates
apply. In particular, any modulus of continuity on the Hessian can be used to
derive H\"{o}lder continuity: Essentially, the results in \cite{CaoLiWang} can
be \textquotedblleft quantized". (Keep in mind that we may alway use a
transformation like the one following (\ref{ataw}), locally, so that the
equation satisfies a $K_{\varepsilon}^{\prime}$-condition nearby).
Bootstrapping, using Schauder theory on difference quotients, one can derive
estimates of all orders. In particular, the full suite of estimates can be
derived by knowing the Hessian is nearly continuous.

\bigskip We record the following corollary which follows immediately from this
discussion$.$

\begin{corollary}
Suppose that $u$ is a entire quadratic solution to $F\left(  D^{2}u\right)
=0,$ for $F\in C^{1,\beta}.$ Then there is an $\varepsilon_{0}\left(
\left\Vert F\right\Vert _{C^{1,\beta}},n\right)  >0$ such that any solution
$u^{\prime}$ with
\[
\left\Vert D^{2}u-D^{2}u^{\prime}\right\Vert <\varepsilon_{0}%
\]
must also be quadratic.
\end{corollary}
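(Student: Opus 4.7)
The plan is to promote the uniform closeness $\|D^2 u - D^2 u'\|_{L^\infty(\mathbb{R}^n)} \le \varepsilon_0$ into scale-invariant $C^{2,\bar\alpha}$ estimates for the difference $w := u' - u$ via Theorem~\ref{Theorem 1.2}, and then close the argument by a Liouville rescaling.

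First I would normalize. Since $u$ is a quadratic entire solution, $M_0 := D^2 u$ is a constant symmetric matrix satisfying $F(M_0) = 0$, and setting $\tilde F(N) := F(M_0 + N)$ gives $\tilde F(\mathbf{0}) = 0$ together with $\tilde F(D^2 w) = 0$ and $\|D^2 w\|_{L^\infty(\mathbb{R}^n)} \le \varepsilon_0$. Because $F \in C^{1,\beta}$, for any two matrices in the ball $\{\|N\|\le\varepsilon_0\}$ one has $\|D\tilde F(N_1) - D\tilde F(N_2)\| \le \|F\|_{C^{1,\beta}}\varepsilon_0^\beta$, so on that ball $\tilde F$ is almost linear in the sense of Definition~\ref{definition 1.1} with constant $\|F\|_{C^{1,\beta}}\varepsilon_0^\beta$, and uniformly elliptic with constants $\lambda,\Lambda$ inherited from the spectrum of $DF(M_0)$.

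Next I would apply Theorem~\ref{Theorem 1.2} to $w$. The one wrinkle is that Definition~\ref{definition 1.1} demands the closeness condition on all of $S_n$, while we have it only on the ball where $D^2 w$ actually lives. I would resolve this by replacing $\tilde F$ on the annulus $\{\varepsilon_0 \le \|N\| \le 2\varepsilon_0\}$ by a smooth interpolation with its linearization $N \mapsto D\tilde F(\mathbf{0})\cdot N$, using a fixed cutoff; the resulting operator $\hat F$ is globally $(\lambda,\Lambda)$-elliptic, globally almost linear with closeness constant $O(\|F\|_{C^{1,\beta}}\varepsilon_0^\beta)$, and coincides with $\tilde F$ on the set containing the image $D^2 w(\mathbb{R}^n)$. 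Choosing $\varepsilon_0$ small enough that this closeness constant is below the threshold $\varepsilon_0(n,\lambda,\Lambda,\bar\alpha)$ furnished by Theorem~\ref{Theorem 1.2}, the theorem becomes applicable to $w$ as a solution of $\hat F(D^2 w) = 0$ on every ball. Now rescale: for $R>0$ the function $w_R(x) := R^{-2} w(Rx)$ solves $\hat F(D^2 w_R) = 0$ on $B_1$, and since $D^2 w$ is globally bounded by $\varepsilon_0$, Taylor's theorem gives $|w(x)| \le |w(0)| + |Dw(0)||x| + \tfrac{\varepsilon_0}{2}|x|^2$, hence $\|w_R\|_{L^\infty(B_1)}$ is bounded uniformly in $R$. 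Theorem~\ref{Theorem 1.2} then yields $[D^2 w_R]_{C^{\bar\alpha}(B_{1/(4\Lambda)})} \le C_1\|w_R\|_{L^\infty(B_1)}$, and unscaling gives $[D^2 w]_{C^{\bar\alpha}(B_{R/(4\Lambda)})} \le C' R^{-\bar\alpha}$. Letting $R \to \infty$ forces $D^2 w$ to be constant on $\mathbb{R}^n$, so $w$ is a quadratic polynomial and therefore so is $u' = u + w$.

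The main technical obstacle is the cutoff extension of $\tilde F$ in the second step: one must verify that the smooth interpolation preserves ellipticity and does not inflate the almost-linear closeness constant by more than a controlled universal factor, which is a routine but fiddly check on the cutoff function. An alternative route, closer in spirit to the Cordes--Nirenberg discussion immediately preceding the corollary, would be to first apply Theorem~\ref{Theorem 1.2} locally to upgrade $u'$ to $C^{2,\bar\alpha}_{\mathrm{loc}}$, then differentiate $F(D^2 u') = 0$ to obtain, for each derivative $u'_k$, a linear elliptic equation whose coefficients $F^{ij}(D^2 u'(x))$ lie within $\|F\|_{C^{1,\beta}}\varepsilon_0^\beta$ of $DF(M_0)$, and run the same Liouville rescaling on $u'_k$ using the Cordes--Nirenberg $C^{1,\alpha}$ estimate in place of Theorem~\ref{Theorem 1.2}; this sidesteps the global extension issue at the cost of a bootstrap step.
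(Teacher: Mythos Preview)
Your proposal is correct. The paper does not spell out a proof; it merely says the corollary ``follows immediately from this discussion,'' meaning the Cordes--Nirenberg material of Section~\ref{cnsection}. That implicit argument is exactly your alternative route: since $D^{2}u'$ stays within $\varepsilon_{0}$ of the constant matrix $M_{0}=D^{2}u$, the linearized coefficients $F^{ij}(D^{2}u')$ have oscillation at most $\|F\|_{C^{1,\beta}}\varepsilon_{0}^{\beta}$; after the change of variables following (\ref{ataw}) they satisfy a $K_{\varepsilon}^{\prime}$-condition, so the Cordes--Nirenberg $C^{1,\alpha}$ estimate applied to each $u_{k}'$ gives scale-invariant interior $C^{2,\alpha}$ bounds depending only on $\|D^{2}u'\|_{L^{\infty}}$, and the Liouville rescaling then forces $D^{2}u'$ to be constant.

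Your primary route, applying Theorem~\ref{Theorem 1.2} directly to $w=u'-u$ as a solution of the shifted equation $\tilde F(N)=F(M_{0}+N)$, is a genuinely different path. It avoids differentiating the equation (and hence the bootstrap from $C^{2,\bar\alpha}$ to $C^{3}$ needed before one can write down the linear equation for $u_{k}'$), trading this for the extension step you flag: capping $\tilde F$ outside $\{\|N\|\le\varepsilon_{0}\}$ so that Definition~\ref{definition 1.1} holds globally. Your sketch of the cutoff is right; the only term to watch is $(D\chi)(\tilde F(N)-D\tilde F(\mathbf{0})\cdot N)$, which contributes $O(\varepsilon_{0}^{-1})\cdot O(\|F\|_{C^{1,\beta}}\varepsilon_{0}^{1+\beta})=O(\|F\|_{C^{1,\beta}}\varepsilon_{0}^{\beta})$ to the closeness constant and to the ellipticity bounds, so nothing is lost. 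Both routes land on the same Liouville endgame you wrote out, which is the part the paper leaves entirely to the reader.
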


Thus quadratic solutions are rigid with respect to the global $C^{2}$ norm.
\newline

\textbf{Acknowledgments.} The first author is grateful to Professor Yu Yuan
for discussions.

\bibliographystyle{amsalpha}
\bibliography{alop}

\end{document}